\documentclass[11pt]{article}
\usepackage[T1]{fontenc}
\usepackage[utf8]{inputenc}
\usepackage{lmodern,microtype}
\usepackage[margin=1in]{geometry}
\usepackage{amsmath,amssymb,amsthm,mathtools}
\usepackage{xurl,xcolor,listings}
\usepackage[hidelinks]{hyperref}
\hypersetup{pdftitle={Entropy bounds and global couplings for union-closed families},pdfauthor={Yunjiang Jiang}}
\newtheorem{theorem}{Theorem}[section]
\newtheorem{lemma}[theorem]{Lemma}
\newtheorem{proposition}[theorem]{Proposition}
\theoremstyle{remark}

\newcommand{\F}{\mathcal F}
\newcommand{\E}{\mathbb E}

\newcommand{\ind}{\mathbf 1}

\allowdisplaybreaks[1]
\title{Entropy bounds and global couplings\\for union-closed families}
\author{Yunjiang Jiang\thanks{AI research assistance: GPT-6.}}
\date{}
\begin{document}
\maketitle
\begin{abstract}
We give a computer-assisted proof that every finite union-closed family
containing a nonempty set has an element in at least
\(0.38288525\) of its members. The argument combines independent sampling
with conditionally independent sampling and an explicit product lower
bound for a binary-entropy kernel. We identify the limiting constant of
this pointwise method through a one-variable stationary equation.
We also construct a global coupling by balancing inverse union
multiplicities and prove a strictly positive, quantitative entropy gain
over independent sampling. Combining the two arguments gives an
additional frequency bound depending on the size of the family.
\end{abstract}

\section{Introduction}\label{sec:statement}

A family \(\F\subseteq 2^{[n]}\), where \([n]=\{1,\ldots,n\}\), is
\emph{union-closed} if \(A\cup B\in\F\) whenever \(A,B\in\F\).
Its members are distinct sets. Write \(N=|\F|\), and define the frequency
of element \(i\) by
\[
p_i=\frac{|\{A\in\F:i\in A\}|}{N},\qquad
p=\max_{1\le i\le n}p_i.
\]
Frankl's conjecture asserts \(p\ge1/2\) whenever \(\F\) contains a
nonempty set. The nonempty-set condition excludes \(\{\varnothing\}\).
For example, in
\(\{\varnothing,\{1\},\{2\},\{1,2\}\}\), both frequencies are \(1/2\).

Gilmer~\cite{gilmer} introduced the entropy approach and proved the
first absolute constant bound, \(0.01\). The central observation is
that independent samples from a family with sufficiently small element
frequencies have a union with greater entropy than either sample.
For a uniform distribution on a union-closed family, this contradicts
the upper bound on the number of possible unions.
The subsequent work of Alweiss, Huang, and Sellke~\cite{ahs},
Chase and Lovett~\cite{chase-lovett}, Sawin~\cite{sawin}, and
Pebody~\cite{pebody} established the constant
\((3-\sqrt5)/2\).

Sawin~\cite{sawin} also proposed using dependent samples to pass beyond
this independent-sampling threshold.
Yu~\cite{yu} developed finite-dimensional optimization bounds and
numerically evaluated the resulting improvement at about \(0.38234\).
Cambie~\cite{cambie} studied dependent sampling and the limits of this
entropy approach. Liu~\cite{liu} introduced conditionally IID
couplings, proved a further strict improvement, and obtained a value
near \(0.38271\) under numerically verified hypotheses.
Our construction uses this line of work, together with the auxiliary
profile developed by Costa~\cite{costa}.

\begin{theorem}[Frequency bound]\label{thm:main}
For every finite union-closed family containing a nonempty set,
\[
\boxed{\displaystyle p\ge\frac{1531541}{4000000}=0.38288525.}
\]
The proof uses the interval verification described in
Section~\ref{sec:certificates}; the complete program is included in
Appendix~\ref{app:code}.
\end{theorem}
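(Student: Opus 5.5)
We argue by contradiction within the entropy framework of Gilmer. Suppose \(\F\) is union-closed, contains a nonempty set, and every frequency satisfies \(p_i<p_*:=1531541/4000000\). Let \(A\) be uniform on \(\F\), so \(H(A)=\log N\). Since \(\F\) is union-closed, any coupling \((A,B)\) with both marginals uniform on \(\F\) has \(A\cup B\in\F\). Hence \(H(A\cup B)\le\log N=H(A)\). The goal is to build a coupling for which \(H(A\cup B)>H(A)\), which gives the contradiction. Throughout we write \(h\) for the binary entropy and work coordinatewise. By the chain rule,
\[
H(A\cup B)-H(A)=\sum_{i=1}^n\Bigl(H\bigl((A\cup B)_i\mid (A\cup B)_{<i}\bigr)-H\bigl(A_i\mid A_{<i}\bigr)\Bigr),
\]
and we condition on the prefixes. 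For a fixed prefix \(a=A_{<i}\), let \(x_a=\Pr(i\in A\mid A_{<i}=a)\). The random variable \(X=x_{A_{<i}}\) then has mean \(p_i<p_*\). The standard reduction of Alweiss--Huang--Sellke and Sawin lower-bounds the \(i\)-th term by an expectation over pairs \((x,y)\) of an explicit kernel in \(h(x)\), \(h(y)\) and \(h(x+y-xy)\). The bound is in terms of the law of \(X\) and of the coupling used between prefixes.

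For the coupling we mix two pieces, as the abstract indicates. The first is \emph{independent sampling}, where \((X,Y)\) are i.i.d. copies. The second is a \emph{conditionally independent} coupling in the style of Liu, where \(A\) and \(B\) are i.i.d. given an auxiliary variable, such as the prefix up to a random coordinate or a Costa-type auxiliary profile. Mixing with weight \(\lambda\) turns the per-coordinate gain into a functional of the distribution \(\mu\) of \(X\) on \([0,1]\). We need this functional to be strictly positive for every \(\mu\) with mean below \(p_*\).

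The key analytic step is a \emph{pointwise} product lower bound. We find explicit functions \(f,g\) on \([0,1]\) with
\[
h(x+y-xy)-\tfrac12\bigl(h(x)+h(y)\bigr)\cdot c\;\ge\; f(x)f(y)-g(x)-g(y)+\dots
\]
in the appropriate mixed form. This makes the integrated functional factor as a quadratic form in the moments \(\int f\,d\mu\), \(\int g\,d\mu\), and positivity reduces to a one-dimensional inequality under the mean constraint. The constants in \(f,g\) and the weight \(\lambda\) are tuned numerically. For this method the limiting threshold is the root of a one-variable stationary equation, and \(p_*\) is chosen slightly below that root.

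The main obstacle is verifying the two-variable kernel inequality rigorously on all of \([0,1]^2\). The entropy terms have infinite derivatives at \(0\) and \(1\), and the margin near the optimum is tiny, around \(10^{-6}\). The plan is to split the square into a boundary region and an interior region. On the boundary we use asymptotic expansions of \(h\), of the form \(h(x)\sim x\log(1/x)\), to prove the inequality by hand. On the interior we use interval arithmetic with adaptive subdivision and bounds on derivatives, as in the program of Section~\ref{sec:certificates}. The one-variable reduction under the mean constraint, and the check that it is positive at \(p_*\), are then certified by the same interval code. Together these give \(H(A\cup B)>H(A)\), contradicting union-closure, so \(p\ge p_*\).
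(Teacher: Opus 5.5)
Your proposal has a genuine gap at the step meant to beat independent sampling. You bound the $i$-th term by an expectation of a kernel in $h(x)$, $h(y)$ and $h(x+y-xy)$, with only the coupling of the prefixes changing. That kernel means the two new bits are conditionally independent given both prefixes, and then the product-bound mechanism cannot gain anything. Suppose $h(x+y-xy)\ge f(x)f(y)-g(x)-g(y)+\cdots$ pointwise. Integrating against a conditionally i.i.d.\ pair gives $(\int f\,d\mu)^2-2\int g\,d\mu+\cdots$, but integrating the same inequality against $\mu\otimes\mu$ shows that this expression is also a lower bound for $\E_{\mu\otimes\mu}h(X+Y-XY)$. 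So every certified bound for your dependent rule, and hence for any mixture, is at most the independent one. For $\mu=\delta_q$ this equals $h((1-q)^2)$. For $(3-\sqrt5)/2<q<1/2$ one has $(1-q)^2<q<1/2$, hence $h((1-q)^2)<h(q)$. Since $(3-\sqrt5)/2\approx0.381966<p_*$, the functional you need to be strictly positive for every $\mu$ with mean below $p_*$ is negative at these point masses. Your scheme is therefore capped at the independent-sampling constant. Your proposed auxiliary variables do not repair this, and a Costa-type profile is not a conditioning variable at all: in the paper it is the function $\psi$ in the product bound.

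The missing idea is shared randomness at the current coordinate. With $x$ the native probability of a zero, a fresh shared uniform $U_i$ gives each sample zero probability $Q_{U_i}(x)=x+a(x)(\ind_{\{U_i\le x\}}-x)$. Because $\E_UQ_U(x)=x$, both marginals stay uniform on $\F$. Conditional on the prefixes alone, the union bit is zero with probability $r(x,y)=xy+a(x)a(y)(\min(x,y)-xy)\ge xy$. With the paper's $a$, $r(x,x)=1/2$ for $1/2<x<1/\sqrt2$, so the diagonal kernel there is $\ln2$ rather than $h(x^2)$. The product bound is imposed on this larger kernel, $h(r(x,y))\ge\psi(x)\psi(y)$. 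Because the prefixes are i.i.d.\ given $(U_1,\dots,U_{i-1})$, this yields $\E\psi(S)\psi(T)\ge(\E_{P_i}\psi)^2$.

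Your handling of the mean constraint is also unjustified: the functional is quadratic in $\mu$, and nothing reduces its minimization over measures of given mean to one variable. The paper avoids this with the pointwise inequality (M), $2m[w_0h(xy)+w_4\psi(x)\psi(y)]\ge xh(y)+yh(x)$. Its right side integrates against $P_i\otimes P_i$ to $2(1-p_i)H(X_i\mid X_{<i})$ for every $\mu$ at once. The computation then certifies two two-variable inequalities, (K) and (M), plus a one-variable bridge, not one kernel inequality and a one-dimensional check. Finally, your strict contradiction needs $H(A)=\ln N>0$, so $N=1$ must be handled separately.
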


We work on a finite ground set, as in the usual formulation. Even if
a finite family is initially described on an arbitrary ground set, it
can be reduced to this case: choose an element of a nonempty member and,
for each pair of distinct members, one element of their symmetric
difference. Intersecting every member with the finite set of chosen
elements is injective, preserves unions, and preserves the frequencies
of the chosen elements.

Here is the idea of the proof. Choose a member \(X\) of \(\F\) uniformly,
representing it by its \(0\)-\(1\) indicator vector. It has entropy
\(H(X)=\ln N\). We construct two different ways to sample a pair of
members and take their union. Union closure says that each resulting
union still takes at most \(N\) values, so its entropy is at most
\(\ln N\). On the other hand, our inequalities force a weighted average
of those union entropies to be at least
\[
\frac{1-p}{m}\ln N,\qquad m=0.61711475.
\]
When \(N\ge2\), comparing the two bounds gives \(1-p\le m\).
When \(N=1\), the sole member is nonempty, so some element has frequency
one. Thus the entire problem is to justify the entropy lower bound.

\paragraph{Why two sampling rules?}
Independent samples make the conditional probabilities easy to average,
but their entropy estimate alone stops below the desired constant.
The second rule shares fresh randomness between the samples, while
keeping each sample uniform on the family. The resulting dependence
can increase the entropy of the union. Its conditional probabilities
are harder to average, so we bound its binary-entropy kernel below by
a product \(\psi(x)\psi(y)\). Conditional independence given the
shared history then turns the expectation of this product into a
square, and nonnegativity of variance gives the needed lower bound.
A weighted combination of the two rules completes the comparison.

\paragraph{How the proof is organized.}
There are three tasks. First, the entropy reduction shows that two
explicit inequalities on \([0,1]^2\) imply the frequency bound.
Second, we construct the sampling coefficient and product profile and
prove those inequalities: analytic estimates cover the boundary
regions, and the program in the appendix covers the remaining compact
domains. Third, a separate argument adjusts the probabilities of whole
pairs of sets to improve on independent sampling. This last argument
is entirely analytic and yields an additional positive correction for
each finite family size. It does not enlarge the universal constant,
because the stated size-only correction tends to zero.

The constant in Theorem~\ref{thm:main} is an exact rational choice, which
we denote by \(c_0=1531541/4000000\). Its origin is an optimization
in the entropy argument. Let \(q_*\) be the unique solution in \((0,1)\) of
\begin{equation}\label{eq:stationary}
(1-q)\ln(1-q)+q\ln(1+q)=0.
\end{equation}
The pointwise method has the upper limit
\begin{equation}\label{eq:constant-ceiling}
C_*=\frac12-\frac{(1-q_*)\ln(1+q_*)}{2\ln2}
\approx0.3828852599667978.
\end{equation}
We prove this limitation in Section~\ref{sec:ceiling}.
The achieved bound \(c_0\) lies just below \(C_*\); the theorem does not
assert that \(c_0\) is a root of \eqref{eq:stationary}, or that \(C_*\)
itself is achieved. The latter is a limit of the proof method, not
an upper bound on the frequency in Frankl's conjecture.
The decimal approximations to \(q_*\) and \(C_*\) are given for
orientation; their definitions are the exact equations
\eqref{eq:stationary}--\eqref{eq:constant-ceiling}.
No root approximation is used in the proof of the achieved bound:
the auxiliary profile below has an exact rational center.

The sampling construction is adapted from Liu~\cite{liu}, and the
starting auxiliary profile is adapted from Costa~\cite{costa}.
We use a rational profile center and a strengthened upper tail.
All analytic reductions and boundary estimates are included below.
The complete interval verification is included in Appendix~\ref{app:code}.

The second argument concerns a coupling of entire sets. If \(N=|\F|\ge2\),
let \(r(S)\) be the number of ordered pairs in \(\F^2\) whose union is \(S\).
We choose positive factors \(a(A)\) so that
\[
P_*(A,B)=\frac{a(A)a(B)}{r(A\cup B)}
\]
has uniform marginals on \(\F\). If \(\nu_*\) and \(\nu_0\) denote the
union laws under this coupling and independent sampling, respectively,
we prove
\[
H(\nu_*)-H(\nu_0)\ge
\frac{\ln^2(2N-1)}{64N^2\ln N}>0.
\]
The construction and this entropy estimate are entirely analytic.

The exposition proceeds from the entropy reduction to the scalar
optimization, the auxiliary functions, and their inequalities.
Section~\ref{sec:global} develops the global coupling and its consequence
for element frequencies. The appendices give the derivative formulas
and the complete verification program.

\section{Entropy preliminaries}\label{sec:entropy}

For a finite-valued random variable \(W\), with probabilities \(q_w\), set
\[
H(W)=-\sum_w q_w\ln q_w,\qquad 0\ln0:=0.
\]
For a bit with probability \(x\) of being zero, its entropy is
\[
h(x)=-x\ln x-(1-x)\ln(1-x).
\]
Thus \(h(0)=h(1)=0\), \(h(x)=h(1-x)\), and
\begin{equation}\label{eq:hderivatives}
h'(x)=\ln\frac{1-x}{x},\qquad
h''(x)=-\frac{1}{x(1-x)}<0\quad(0<x<1).
\end{equation}
In particular, \(h\) is concave, increases up to \(1/2\), decreases
thereafter, and has maximum \(\ln2\).

\paragraph{Maximum entropy.}
If \(W\) takes \(r\le N\) possible values with positive probabilities,
concavity of the logarithm gives
\[
H(W)=\sum_w q_w\ln\frac1{q_w}
\le\ln\left(\sum_wq_w\frac1{q_w}\right)=\ln r\le\ln N.
\]
Equality holds for the uniform distribution on \(N\) values.

\paragraph{Conditional entropy and the chain rule.}
Define \(H(V\mid W)=\sum_w\Pr(W=w)H(V\mid W=w)\).
Writing each joint probability as a marginal times a conditional
probability, and taking logarithms, proves
\[
H(V,W)=H(W)+H(V\mid W).
\]
Applying this repeatedly to a vector gives
\begin{equation}\label{eq:chain}
H(X)=\sum_{i=1}^n H(X_i\mid X_{<i}),\qquad
X_{<i}=(X_1,\ldots,X_{i-1}).
\end{equation}

\paragraph{Conditioning reduces entropy.}
The function \(-q\ln q\) is concave, so entropy is a concave function
of the probability vector. Averaging the more detailed conditional
distributions and applying concavity gives
\[
H(V\mid W)\ge H(V\mid W,T).
\]
Consequently, if \(W\) is a deterministic function of \(T\),
then \(H(V\mid W)\ge H(V\mid T)\). Also,
if \(V\) is a deterministic function of \(T\), the chain rule gives
\(H(V)\le H(T)\).

\paragraph{Two elementary binary-entropy inequalities.}
Concavity and \(h(0)=0\) imply
\begin{equation}\label{eq:concavity-scaling}
h(cx)\ge c\,h(x)\quad(0\le c,x\le1).
\end{equation}
If two independent bits each have probability \(x\) of being one,
their AND has probability \(x^2\) of being one. Since AND is a
deterministic function of the pair, the preceding facts imply
\begin{equation}\label{eq:and}
h(x^2)\le2h(x).
\end{equation}
These are the only general entropy facts needed outside the sampling
argument.

All logarithms are natural. Entropy in base \(2\) is obtained by dividing
by \(\ln2\); this rescales both sides of the entropy comparisons by the
same factor.

\section{Reduction to two scalar inequalities}\label{sec:reduction}

We give the entropy reduction in a general form, following
Gilmer's entropy-growth strategy~\cite{gilmer} and Liu's conditionally
IID construction~\cite{liu}. This specifies precisely what the
subsequent scalar inequalities must establish.
Let \(a:[0,1]\to[0,1]\) be a function and define
\begin{equation}\label{eq:kernel-general}
r(x,y)=xy+a(x)a(y)\bigl(\min(x,y)-xy\bigr),\qquad
K(x,y)=h(r(x,y)).
\end{equation}
Let \(\psi:[0,1]\to[0,\infty)\), and let \(w_0,w_4\ge0\) sum to one.
The subscript \(4\) records the connection with Liu's Example 4.

\begin{proposition}[Entropy reduction]\label{prop:reduction}
Suppose \(m>0\), and, for every \(x,y\in[0,1]\),
\begin{align}
K(x,y)&\ge\psi(x)\psi(y),\tag{K}\label{eq:K}\\
2m\bigl[w_0h(xy)+w_4\psi(x)\psi(y)\bigr]
&\ge xh(y)+yh(x).\tag{M}\label{eq:M}
\end{align}
Then every finite union-closed family containing a nonempty set
has \(p\ge1-m\).
\end{proposition}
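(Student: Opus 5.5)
The plan is to compare two ways of producing a union of two uniformly random members, following the sketch in the introduction. Let \(X\) be uniform on \(\F\), and for each coordinate set \(x_i=\Pr(X_i=0\mid X_{<i})\). This is a function of the prefix \(X_{<i}\). By the chain rule \eqref{eq:chain}, \(\sum_i\E h(x_i)=H(X)=\ln N\). Since \(\Pr(X_i=0)=1-p_i\), we also have \(\E x_i=1-p_i\ge1-p\). The case \(N=1\) is handled as in the introduction (the sole member is nonempty, so some frequency is one), so assume \(N\ge2\).

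\emph{First rule (independent).} Let \(Y\) be an independent copy of \(X\), with \(y_i\) defined from \(Y_{<i}\) in the same way. The union \(Z=X\cup Y\) lies in \(\F\), so \(H(Z)\le\ln N\). Conditioning on more reduces entropy, and \((Z_{<i})\) is a function of \((X_{<i},Y_{<i})\); hence \(H(Z)\ge\sum_iH(Z_i\mid X_{<i},Y_{<i})\). Given the two prefixes, \(Z_i=0\) exactly when both bits are zero, which has probability \(x_iy_i\). Therefore
\[
\ln N\ge\sum_i\E\,h(x_iy_i),
\]
where \(x_i,y_i\) are i.i.d. copies of the law of \(x_i\).

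\emph{Second rule (conditionally i.i.d.).} Build \((X',Y')\) coordinate by coordinate. At step \(i\), let \(x=\Pr(X_i=0\mid X_{<i}=X'_{<i})\) and let \(y\) be defined likewise from \(Y'_{<i}\). Draw a shared uniform \(U_i\). Independently of everything else, \(X'\) enters ``coupled mode'' with probability \(a(x)\), in which case \(X'_i=0\) iff \(U_i<x\). Otherwise \(X'_i\) is set to \(0\) with probability \(x\) using private randomness. \(Y'\) does the same with \(a(y)\) and the same \(U_i\).

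This construction has three properties, which I will verify in order.
\begin{enumerate}
\item Each marginal coordinate is zero with conditional probability \(x\) given the sample's own prefix. Hence \(X'\) and \(Y'\) are each uniform on \(\F\).
\item Given both prefixes, the probability that both bits are zero is \(a(x)a(y)\min(x,y)+(1-a(x)a(y))xy=r(x,y)\).
\item Conditionally on \(\mathcal G=\sigma(U_1,\dots,U_n)\), the two samples use only private randomness and identical rules, so they are i.i.d.
\end{enumerate}
By the same chain-rule argument, property 2, and \eqref{eq:K}, the union \(Z'\) satisfies
\[
\ln N\ge H(Z')\ge\sum_i\E K(x_i',y_i')\ge\sum_i\E\,\psi(x_i')\psi(y_i').
\]
By property 3, \(\E[\psi(x'_i)\psi(y'_i)]=\E\bigl[\E[\psi(x'_i)\mid\mathcal G]^2\bigr]\ge(\E\psi(x'_i))^2\). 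By property 1, \(\E\psi(x_i')=\E\psi(x_i)\), so this lower bound is \((\E\psi(x_i))^2\). That quantity also equals \(\E\psi(x_i)\psi(y_i)\) for independent copies.

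\emph{Combination.} Take the weighted sum with \(w_0+w_4=1\) and apply \eqref{eq:M} pointwise to the i.i.d. pair \((x_i,y_i)\):
\[
\ln N\ge\sum_i\E\bigl[w_0h(x_iy_i)+w_4\psi(x_i)\psi(y_i)\bigr]
\ge\frac1{2m}\sum_i\E\bigl[x_ih(y_i)+y_ih(x_i)\bigr]
=\frac1m\sum_i\E x_i\,\E h(x_i).
\]
The last sum is at least \((1-p)\ln N/m\). Since \(\ln N>0\), this gives \(p\ge1-m\).

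The main obstacle is the second rule. One must define the coupling so that the pair probability is exactly \(r(x,y)\) and the marginals remain uniform. At the same time the shared randomness must make the samples conditionally i.i.d., since that is what turns the product expectation into a square. The delicate point is checking that the conditioning on \(\mathcal G\) does not interfere with the marginal-uniformity claim. I would handle this by first verifying, by induction on \(i\), that the joint law of \(X'\) alone is the uniform law, and only then invoking conditional independence given \(\mathcal G\).
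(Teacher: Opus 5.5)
Your proposal is correct and follows the paper's proof essentially step for step. This covers the independent rule, the chain-rule and conditioning lower bounds, the variance step that turns $\E[\psi(x_i')\psi(y_i')]$ into a square, and integrating (M) against the product law. Your ``coupled mode'' description is just a realization of the paper's rule: given $U_i$ and the prefix, the zero-probability is $a(x)\ind_{\{U_i<x\}}+(1-a(x))x=Q_{U_i}(x)$, with conditionally independent bits. Conditioning on all of $\sigma(U_1,\dots,U_n)$ instead of the paper's $V_i=(U_1,\dots,U_{i-1})$ is harmless.
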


\subsection{The native conditional probabilities}
Let \(X\) be uniform on \(\F\).
For each possible prefix \(\alpha\), define
\[
x_i(\alpha)=\Pr(X_i=0\mid X_{<i}=\alpha).
\]
A prefix with probability zero can be assigned any value, since it
will never be reached by the constructions below. The term
\emph{native} means that this probability is computed from the prescribed law of
\(X\), before choosing any coupling of the two samples.

Let \(P_i\) be the distribution of \(x_i(X_{<i})\). By conditioning,
\begin{equation}\label{eq:Pi}
\E_{P_i}x=1-p_i,\qquad
\E_{P_i}h(x)=H(X_i\mid X_{<i}).
\end{equation}
Both constructions will retain the same individual sample law, so
the same \(P_i\) applies to both.

\subsection{The independent rule}
Generate two independent samples \(X^{(0)},Y^{(0)}\), each with the
law of \(X\), and let \(Z_0=X^{(0)}\lor Y^{(0)}\), where \(\lor\)
denotes coordinatewise OR.
Given their prefixes at coordinate \(i\), their zero probabilities
are \(S=x_i(X^{(0)}_{<i})\) and \(T=x_i(Y^{(0)}_{<i})\).
They have joint distribution \(P_i\otimes P_i\). The union bit is
zero exactly when both bits are zero, so
\[
H((Z_0)_i\mid X^{(0)}_{<i},Y^{(0)}_{<i})
=\E_{P_i\otimes P_i}h(ST).
\]
The union prefix is determined by those two prefixes. Conditioning
therefore gives
\begin{equation}\label{eq:ind-lower}
H((Z_0)_i\mid (Z_0)_{<i})
\ge\E_{P_i\otimes P_i}h(ST).
\end{equation}

\subsection{The shared-randomness rule}
At coordinate \(i\), draw a fresh shared uniform random variable
\(U_i\) on \([0,1]\). For a native zero probability \(x\), set
\begin{equation}\label{eq:Q}
Q_{U_i}(x)=x+a(x)\bigl(\ind_{\{U_i\le x\}}-x\bigr).
\end{equation}
Conditional on \(U_i\) and the two prefixes, use independent local
randomness to generate two bits with zero probabilities \(Q_{U_i}(S)\)
and \(Q_{U_i}(T)\). Repeat this at each coordinate to obtain
\(X^{(4)},Y^{(4)}\), and put \(Z_4=X^{(4)}\lor Y^{(4)}\).

There are three facts to check.
First, \(Q_U(x)\) is a convex combination of \(x\) and an indicator,
so it lies in \([0,1]\). Second,
\[
\E_UQ_U(x)=x.
\]
The fresh \(U_i\) is independent of either past prefix. Thus,
averaging over \(U_i\), each sample has its native transition
probability at every coordinate. Induction proves that each
complete sample has the law of \(X\).

Third, writing \(I_x=\ind_{\{U\le x\}}\), we have
\(\E I_x=x\) and \(\E I_xI_y=\min(x,y)\). Expanding the product in
\eqref{eq:Q} gives
\begin{equation}\label{eq:Qproduct}
\E_U[Q_U(x)Q_U(y)]
=xy+a(x)a(y)\bigl(\min(x,y)-xy\bigr)=r(x,y).
\end{equation}
It follows that, conditional on the two prefixes, the union bit
has zero probability \(r(S,T)\), and hence entropy \(K(S,T)\).

\subsection{Where conditional independence is used}
Let \(V_i=(U_1,\ldots,U_{i-1})\) be the shared history.
Conditional on \(V_i\), each prefix is a function of that fixed
history and its own independent local random variables. The two
prefixes are therefore independent and identically distributed
conditional on \(V_i\). The same is true of
\(S=x_i(X^{(4)}_{<i})\) and \(T=x_i(Y^{(4)}_{<i})\).
Consequently,
\begin{align}
\E[\psi(S)\psi(T)]
&=\E\left[\bigl(\E[\psi(S)\mid V_i]\bigr)^2\right]\notag\\
&\ge\bigl(\E\psi(S)\bigr)^2
=\bigl(\E_{P_i}\psi\bigr)^2. \label{eq:conditional-jensen}
\end{align}
The inequality is simply nonnegativity of variance.
Combining conditioning, \eqref{eq:Qproduct}, and \eqref{eq:K} gives
\begin{equation}\label{eq:rule4-lower}
H((Z_4)_i\mid (Z_4)_{<i})
\ge \E K(S,T)\ge\bigl(\E_{P_i}\psi\bigr)^2.
\end{equation}

\subsection{Completing the reduction}
Integrate \eqref{eq:M} against the \emph{independent} product
distribution \(P_i\otimes P_i\). Its right side becomes
\(2(\E_{P_i}x)(\E_{P_i}h)\). Dividing by \(2m\) gives
\[
w_0\E_{P_i\otimes P_i}h(ST)
+w_4\bigl(\E_{P_i}\psi\bigr)^2
\ge\frac{1-p_i}{m}H(X_i\mid X_{<i}).
\]
Equations \eqref{eq:ind-lower} and \eqref{eq:rule4-lower}, followed
by summation and the chain rule, now imply
\begin{align*}
w_0H(Z_0)+w_4H(Z_4)
&\ge\sum_{i=1}^n\frac{1-p_i}{m}H(X_i\mid X_{<i})\\
&\ge\frac{1-p}{m}H(X).
\end{align*}
Every conditional entropy is nonnegative, justifying the second
inequality. Union closure makes both \(Z_0\) and \(Z_4\) take values
in \(\F\). Their weighted entropy is therefore at most \(\ln N\).
For \(N\ge2\), divide by \(H(X)=\ln N>0\) to obtain \(p\ge1-m\).
The case \(N=1\) was handled in Section~\ref{sec:statement}.
This proves Proposition~\ref{prop:reduction}.

The two rules may be constructed on entirely separate probability
spaces. Only their entropy values are averaged; the argument does
not require choosing a new rule independently at every coordinate.

\section{The constant arising from the pointwise method}
\label{sec:ceiling}

Two necessary conditions, one at the boundary and one on the diagonal,
identify the constant \(C_*\) in \eqref{eq:constant-ceiling}.

\begin{proposition}[Limitation of the pointwise method]
\label{prop:sharper-ceiling}
If \eqref{eq:K}--\eqref{eq:M} hold, then
\[
1-m\le C_*:=
\frac12-\max_{0<q<1}
\frac{qh(q)-\frac12h(q^2)}{\ln2}.
\]
The maximum occurs at the unique solution of
\eqref{eq:stationary}, and \(C_*\) is given by
\eqref{eq:constant-ceiling}.
\end{proposition}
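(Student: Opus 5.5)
The plan is to extract two necessary consequences of \eqref{eq:K}--\eqref{eq:M}: one from the corner \(x=y=1\) together with the edge \(y=1\), and one from the diagonal \(x=y=q\). Eliminating \(w_0,w_4\) and \(\psi\) between them should leave a scalar inequality in \(m\) for each \(q\in(0,1)\).

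\emph{Boundary step.} Since \(r(1,1)=1\), we have \(K(1,1)=h(1)=0\), so \eqref{eq:K} gives \(\psi(1)^2\le0\), hence \(\psi(1)=0\). Putting \(y=1\) in \eqref{eq:M} and using \(h(1)=0\), the left side is \(2m\,w_0h(x)\) and the right side is \(h(x)\). Choosing any \(x\in(0,1)\), where \(h(x)>0\), yields
\[
w_0\ge\frac1{2m}.
\]

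\emph{Diagonal step.} At \(x=y=q\), \eqref{eq:K} gives \(\psi(q)^2\le K(q,q)\le\ln2\), because \(h\le\ln2\). Then \eqref{eq:M} becomes \(2m[w_0h(q^2)+w_4\ln2]\ge2qh(q)\). Writing \(w_4=1-w_0\), the left side equals \(2m\ln2-2mw_0(\ln2-h(q^2))\). Since \(\ln2-h(q^2)\ge0\), the boundary step bounds this above by \(2m\ln2-(\ln2-h(q^2))\). Hence
\[
m\ge\frac12+\frac{qh(q)-\tfrac12h(q^2)}{\ln2}
\]
for every \(q\in(0,1)\), and taking the supremum over \(q\) gives \(1-m\le C_*\). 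Note that the function \(a\) plays no role here; only the crude bound \(K\le\ln2\) is used.

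\emph{Locating the maximum.} Let \(g(q)=qh(q)-\frac12h(q^2)\). Using \eqref{eq:hderivatives}, a short computation gives
\[
g'(q)=h(q)+q\ln\frac{q}{1+q}=-\bigl[(1-q)\ln(1-q)+q\ln(1+q)\bigr]=-\varphi(q).
\]
So critical points are exactly the roots of \eqref{eq:stationary}. Since \(g(0)=g(1)=0\) and \(g>0\) somewhere (for small \(q\), \(g\approx q^2\ln(1/q)\cdot\)const \(>0\)), the maximum is interior. For uniqueness I would show that \(\varphi\) has exactly one root in \((0,1)\). We have \(\varphi(0)=0\), \(\varphi(q)<0\) for small \(q\) (its expansion begins \(-q^2/2\)), and \(\varphi(q)\to\ln2>0\) as \(q\to1\). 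Computing \(\varphi''(q)=\frac{1}{1-q}+\frac{2}{1+q}-\frac{q}{(1+q)^2}>0\) shows \(\varphi\) is convex, so it crosses zero exactly once. Finally, at \(q_*\) I would substitute \((1-q)\ln(1-q)=-q\ln(1+q)\) into \(g\), expanding \(h(q^2)\) with \(\ln(1-q^2)=\ln(1-q)+\ln(1+q)\). This should collapse to \(g(q_*)=\frac12(1-q_*)\ln(1+q_*)\), which gives \eqref{eq:constant-ceiling}.

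The main obstacle is only bookkeeping in this last algebraic reduction and the convexity check; the conceptual content is the two-point elimination above.
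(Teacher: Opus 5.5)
Your proposal is correct and follows the paper's proof. It uses the same endpoint bound \(mw_0\ge\frac12\) obtained from \(\psi(1)=0\), the same diagonal elimination via \(\psi(q)^2\le\ln2\), and the same substitution at the stationary point. Your uniqueness argument (strict convexity of \((1-q)\ln(1-q)+q\ln(1+q)\) on \([0,1)\), combined with its zero at \(q=0\)) is a slightly tidier packaging of the paper's split at \(q=\frac12\) with \(f'''<0\). Two harmless slips: that expression expands as \(-q+\frac32q^2+\cdots\), not \(-\frac12q^2+\cdots\), and the logarithmic terms in \(g\) cancel so that \(g(q)=\frac12q^2+O(q^3)\). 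Only the signs are used, so nothing breaks.
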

\begin{proof}
The sampling kernel satisfies \(K(1,1)=h(1)=0\).
Thus \eqref{eq:K} forces \(\psi(1)=0\).
Put \(y=1\) in \eqref{eq:M}, and choose any \(0<x<1\). It gives
\[
2mw_0h(x)\ge h(x),
\qquad\text{hence}\qquad mw_0\ge\frac12.
\tag{E}
\label{eq:endpoint-barrier}
\]
Next put \(x=y=q\) in \eqref{eq:M}. Since
\(\psi(q)^2\le K(q,q)\le\ln2\), and \(w_4=1-w_0\), we obtain
\[
\begin{aligned}
qh(q)
&\le m\bigl[w_0h(q^2)+w_4\psi(q)^2\bigr]\\
&\le m\ln2-mw_0\bigl[\ln2-h(q^2)\bigr]\\
&\le m\ln2-\frac12\bigl[\ln2-h(q^2)\bigr].
\end{aligned}
\]
The last step uses (E) and \(h(q^2)\le\ln2\).
Rearranging gives, for every \(q\in(0,1)\),
\[
1-m\le\frac12-\frac{qh(q)-\frac12h(q^2)}{\ln2}.
\tag{C}
\label{eq:ceiling-each-q}
\]
Taking the strongest of these bounds proves the claimed formula.

To identify the maximizing point, put \(f(q)=qh(q)-h(q^2)/2\).
Direct differentiation and cancellation give
\[
f'(q)=-(1-q)\ln(1-q)-q\ln(1+q),
\]
\[
f''(q)=\ln\frac{1-q}{1+q}+\frac1{1+q}.
\]
For \(0<q\le1/2\), the inequalities
\(-\ln(1-q)>q\) and \(\ln(1+q)<q\) show that
\(f'(q)>q(1-2q)\ge0\).
For \(q\ge1/2\), \(f''(1/2)=-\ln3+2/3<0\), and
\[
f'''(q)=-\frac2{1-q^2}-\frac1{(1+q)^2}<0.
\]
Thus \(f'\) is strictly decreasing on \([1/2,1)\).
It starts there at \(\frac12\ln(4/3)>0\) and tends to
\(-\ln2<0\). There is exactly one maximizing point \(q_*\).
The equation \(f'(q_*)=0\) is precisely
\eqref{eq:stationary}. Expanding \(h(q^2)\) gives
\[
f(q)=\frac12(1-q)^2\ln(1-q)
     +\frac12(1-q^2)\ln(1+q).
\]
At the stationary point,
\((1-q_*)\ln(1-q_*)=-q_*\ln(1+q_*)\), so this simplifies to
\[
f(q_*)=\frac12(1-q_*)\ln(1+q_*).
\]
Substitution proves \eqref{eq:constant-ceiling}.
For numerical evaluation, put
\(\Phi(q)=(1-q)\ln(1-q)+q\ln(1+q)\).
Ordinary Newton iteration is
\[
q_{j+1}=q_j-
\frac{\Phi(q_j)}
{\displaystyle\ln\frac{1+q_j}{1-q_j}-\frac1{1+q_j}}.
\]
Starting at \(q_0=0.69\) gives
\(q_*\approx0.6909077387254078\), and substitution into
\eqref{eq:constant-ceiling} gives the stated decimal value of \(C_*\).
No formal certification of these decimal approximations is needed:
the formulas for \(q_*\) and \(C_*\) are exact.
\end{proof}

The same obstruction applies if the dependent term is replaced by a
finite weighted sum of admissible products
\(\psi_j(x)\psi_j(y)\): each \(\psi_j(1)=0\), and each diagonal
product is at most \(\ln2\). The identical endpoint and diagonal
argument then applies.

This limitation applies to the pointwise product reduction
\eqref{eq:K}--\eqref{eq:M}. It does not exclude stronger estimates from
other entropy arguments or from global couplings. Our parameter choice
in the next section yields \(1-m=c_0<C_*\).

\section{The explicit auxiliary functions}\label{sec:profile}

We now specify the functions in \eqref{eq:K}--\eqref{eq:M}.
Every terminating decimal below denotes an exact rational number.
Set
\begin{equation}\label{eq:parameters}
\begin{gathered}
m=\frac{2468459}{4000000}=0.61711475,\qquad
w_0=\frac{8102221}{10000000},\qquad w_4=\frac{1897779}{10000000},\\
A=\frac{179}{250}=0.716,\qquad B=\frac{721}{1000}=0.721,\\
z=\frac{181}{256},\qquad \sigma=\frac{99999999}{100000000}.
\end{gathered}
\end{equation}
The letter \(z\) in this section is a fixed breakpoint, not a random
union vector. The notation \(w_4\) refers to a sampling rule, not a
fourth power.

Choose the exact rational center
\[
x_*=\frac{172727}{250000}=0.690908.
\]
It lies strictly between \(0.69\) and \(0.70\).
This is a fixed parameter, not a rounded value used in place of an
algebraic root. Every sign check below uses this exact rational number.

The center is chosen close to the optimizer \(q_*\) from
\eqref{eq:stationary}. The sampling weights satisfy \(mw_0>1/2\),
as required by the endpoint constraint
\eqref{eq:endpoint-barrier}. These are fixed parameters of the
construction; no optimality assertion is needed.
Define
\begin{equation}\label{eq:E}
E(x)=-\frac{h'(x_*)}{2h(x_*)}(x-x_*)-\frac7{10}(x-x_*)^2,
\end{equation}
and the continuous function
\begin{equation}\label{eq:D}
D(x)=
\begin{cases}
h(x),&0\le x\le1/2,\\
\ln2,&1/2\le x\le z,\\
\dfrac{\ln2}{h(z^2)}h(x^2),&z\le x\le1.
\end{cases}
\end{equation}
Continuity at \(1/2\) follows from \(h(1/2)=\ln2\), and continuity
at \(z\) follows by cancellation of \(h(z^2)\).

The first profile is
\begin{equation}\label{eq:phi}
\varphi(x)=\sigma\sqrt{\frac{D(x)h(x)}{h(x_*)}}\,e^{E(x)}.
\end{equation}
It is continuous, nonnegative, and vanishes at 0 and 1.
This is the profile before the upper-tail improvement. In particular,
it has no artificial cutoff near either endpoint.

Set
\begin{equation}\label{eq:s}
s(x)=\sqrt{h(x^2)},\qquad
\theta(x)=\frac{x-A}{B-A}.
\end{equation}
Our stronger profile is
\begin{equation}\label{eq:psi}
\psi(x)=
\begin{cases}
\varphi(x),&0\le x\le A,\\
(1-\theta(x))\varphi(x)+\theta(x)s(x),&A\le x\le B,\\
s(x),&B\le x\le1.
\end{cases}
\end{equation}
The interpolation ensures continuity: its weight on \(s\) is zero
at \(A\) and one at \(B\). Thus \(\psi\) is continuous, nonnegative,
and also vanishes at both endpoints.

Finally, choose the sampling coefficient
\begin{equation}\label{eq:a}
a(x)=
\begin{cases}
1,&0\le x\le1/2,\\
\sqrt{\dfrac{1-2x^2}{2x(1-x)}},&1/2<x<1/\sqrt2,\\
0,&1/\sqrt2\le x\le1.
\end{cases}
\end{equation}
On the middle interval the numerator and denominator are positive,
and the ratio is at most one because \(1\le2x\).
The endpoint values agree, so \(a\) is continuous and takes values
in \([0,1]\). We use it in \eqref{eq:kernel-general}.

The purpose of these choices is concrete. In the upper region
\(x\ge A>1/\sqrt2\), the coefficient \(a(x)\) is zero, so the kernel
reduces to \(h(xy)\). The next section shows that this simpler kernel
supports the stronger product \(s(x)s(y)\). A one-variable bridge
will allow the stronger tail to coexist with the original lower part.

\section{An upper-tail extension}\label{sec:tail}

\subsection{Geometric concavity of binary entropy}
Ordinary concavity compares \(h\) at arithmetic averages.
We need a related fact that compares it at geometric averages.

\begin{lemma}\label{lem:geometric}
For \(0<x,y<1\),
\[
h(xy)\ge\sqrt{h(x^2)h(y^2)}=s(x)s(y).
\]
Moreover, for fixed \(0<y\le A\), the ratio \(h(xy)/s(x)\)
is nondecreasing for \(A\le x<1\).
\end{lemma}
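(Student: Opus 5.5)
The plan is to reduce both claims to a single concavity statement in logarithmic coordinates. Put \(g(t)=\ln h(e^t)\) for \(t<0\); this is well defined because \(h>0\) on \((0,1)\). Write \(x=e^u\) and \(y=e^v\) with \(u,v<0\). Then \(xy=e^{u+v}\), and the first inequality, squared and logged, reads
\[
g(u+v)\ge\tfrac12\bigl(g(2u)+g(2v)\bigr).
\]
This is midpoint concavity of \(g\) at the points \(2u,2v\). So it suffices to show that \(g\) is concave on \((-\infty,0)\). Equivalently, \(g'(t)=F(e^t)\) should be nonincreasing in \(t\), where
\[
F(x)=\frac{xh'(x)}{h(x)}=\frac{x\ln\frac{1-x}{x}}{h(x)},\qquad 0<x<1.
\]
Since \(t\mapsto e^t\) is increasing, this amounts to showing that \(F\) is nonincreasing on \((0,1)\).

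The key computation, and the only real obstacle, is the sign of \(F'\). Using \eqref{eq:hderivatives}, the sign of \(F'\) is that of
\[
\bigl(h'(x)+xh''(x)\bigr)h(x)-x\,h'(x)^2 .
\]
Set \(L=\ln\frac{1-x}{x}\), so that \(h'+xh''=L-\frac1{1-x}\). For \(x\ge\frac12\) we have \(L\le0\), so the first product is negative and the whole expression is negative. For \(x<\frac12\), I would use the identity \(h(x)=xL-\ln(1-x)\). Expanding, the \(xL^2\) terms cancel, and it remains to show
\[
-L\ln(1-x)-\frac{xL}{1-x}+\frac{\ln(1-x)}{1-x}\le0.
\]
The elementary bound \(-\ln(1-x)=\ln\bigl(1+\tfrac{x}{1-x}\bigr)\le\tfrac{x}{1-x}\), together with \(L>0\), makes the first two terms sum to at most \(0\). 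The last term is negative. Hence \(F'\le0\), so \(g\) is concave and the first claim follows. The degenerate cases are immediate: if \(x=y\) there is equality.

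For the monotonicity claim, fix \(y=e^v\) with \(0<y\le A\), and let \(x=e^u\in[A,1)\). Then
\[
\ln\frac{h(xy)}{s(x)}=g(u+v)-\tfrac12g(2u),
\]
whose derivative in \(u\) is \(g'(u+v)-g'(2u)\). Because \(y\le A\le x\), we have \(v\le u\), hence \(u+v\le 2u\). Since \(g'\) is nonincreasing by the concavity just proved, \(g'(u+v)\ge g'(2u)\). Therefore the ratio \(h(xy)/s(x)\) is nondecreasing in \(x\) on \([A,1)\), which is the second claim.
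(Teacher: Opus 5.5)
Your proposal is correct and is essentially the paper's own proof. The paper likewise shows that \(q(x)=xh'(x)/h(x)\) is strictly decreasing by splitting at \(x=1/2\), using the identity \(h=xh'-\ln(1-x)\) and the bound \(-\ln(1-x)<x/(1-x)\), and it then concludes from concavity of \(t\mapsto\ln h(e^t)\) and from \(xy\le x^2\), exactly as you do.
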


\begin{proof}
Write
\[
q(x)=\frac{xh'(x)}{h(x)},\qquad \ell(x)=-\ln(1-x).
\]
The identity \(h(x)-xh'(x)=\ell(x)\), together with
\eqref{eq:hderivatives}, gives
\begin{align*}
h(x)^2q'(x)
&=h(x)h'(x)+xh(x)h''(x)-x(h'(x))^2\\
&=h'(x)\ell(x)-\frac{h(x)}{1-x}.
\end{align*}
For \(x\ge1/2\), the first term is nonpositive and the second
strictly negative. For \(x<1/2\), substitute \(h=xh'+\ell\) to obtain
\[
h(x)^2q'(x)
=h'(x)\left(\ell(x)-\frac{x}{1-x}\right)
-\frac{\ell(x)}{1-x}<0.
\]
Here \(h'(x)>0\) and
\[
\ell(x)=\int_0^x\frac{dt}{1-t}<\frac{x}{1-x}.
\]
Thus \(q\) is strictly decreasing.

For \(f(t)=\ln h(e^t)\), with \(t<0\), one has
\(f'(t)=q(e^t)\) and \(f''(t)=e^tq'(e^t)<0\).
Apply concavity of \(f\) at \(2\ln x\) and \(2\ln y\):
\[
\ln h(xy)\ge\frac12\ln h(x^2)+\frac12\ln h(y^2).
\]
Exponentiation proves the first assertion.
For the second,
\[
\frac{d}{dx}\ln\frac{h(xy)}{s(x)}
=\frac{q(xy)-q(x^2)}{x}\ge0,
\]
because \(y\le A\le x\) implies \(xy\le x^2\).
\end{proof}

\subsection{The two facts needed at the join}
We will prove the following bridge inequality in
Section~\ref{sec:bridge}:
\begin{equation}\label{eq:bridge}
h(Ay)\ge s(A)\varphi(y)\qquad(0\le y\le A).
\end{equation}
We also need \(\varphi(x)\le s(x)\) for \(x\ge A\).
Because \(A>z\), the formula for their ratio simplifies to
\[
\frac{\varphi(x)}{s(x)}
=\sigma\sqrt{\frac{\ln2\,h(x)}{h(z^2)h(x_*)}}\,e^{E(x)}
\quad(A\le x<1).
\]
Its logarithmic derivative is
\[
\frac{h'(x)}{2h(x)}+E'(x).
\]
This derivative is strictly decreasing: its derivative is
\[
\frac{h''(x)h(x)-(h'(x))^2}{2h(x)^2}-\frac75<0.
\]
The scalar interval checks give
\begin{equation}\label{eq:tail-scalars}
\begin{gathered}
s(A)-\varphi(A)>0,\\
-\left(\frac{h'(A)}{2h(A)}+E'(A)\right)>0.
\end{gathered}
\end{equation}
Thus the ratio starts below one and decreases. By continuity the
endpoint \(x=1\) is included, and
\begin{equation}\label{eq:profile-order}
\varphi(x)\le\psi(x)\le s(x)\qquad(A\le x\le1).
\end{equation}
On \([0,A]\), of course, \(\psi=\varphi\). In particular,
\(\psi\ge\varphi\) on the entire unit interval.

\subsection{Passing from the original to the stronger kernel bound}
Assume for the moment the original profile bound
\begin{equation}\label{eq:original-K}
K(x,y)\ge\varphi(x)\varphi(y)\qquad(0\le x,y\le1).
\end{equation}
It is proved independently in the next section.
We verify \eqref{eq:K} by dividing the square into three cases.

If \(x,y\le A\), equation \eqref{eq:original-K} is exactly the
required result. If \(x,y\ge A\), then \(a(x)=a(y)=0\), so
Lemma~\ref{lem:geometric} and \eqref{eq:profile-order} give
\[
K(x,y)=h(xy)\ge s(x)s(y)\ge\psi(x)\psi(y).
\]
In the mixed case, by symmetry take \(x\ge A\ge y\).
Again \(a(x)=0\). Monotonicity of the ratio in
Lemma~\ref{lem:geometric} and the bridge give
\[
K(x,y)=h(xy)
\ge s(x)\frac{h(Ay)}{s(A)}
\ge s(x)\varphi(y)\ge\psi(x)\psi(y).
\]
The reasoning for ratios used interior points, but the final
inequalities extend to every boundary point by continuity.
This proves \eqref{eq:K} once \eqref{eq:original-K} and
\eqref{eq:bridge} have been established.

\section{The kernel and bridge inequalities}\label{sec:kernel}

We divide the original kernel verification into a compact square
and two boundary strips. Set
\[
L=\frac1{16},\qquad U=\frac{15}{16}.
\]
The interval verification proves \eqref{eq:original-K} on \([L,U]^2\).
The estimates below cover every pair outside that square.

\subsection{Constants and elementary bounds}
For brevity, write \(h_*=h(x_*)\). Define
\begin{align}
C_b&=\frac{\sigma}{\sqrt{h_*}}e^{E(L)},&
P_0&=\frac{\sigma\ln2}{\sqrt{h_*}}e^{E(1/2)},\notag\\
C_g&=\sigma\sqrt{\frac{2\ln2}{h_*h(z^2)}}e^{E(1)},&
C_t&=\frac{\sigma}{U}
\sqrt{\frac{\ln2\,h(U^2)h(U)}{h(z^2)h_*}}e^{E(1)}.
\label{eq:strip-constants}
\end{align}
The scalar interval checks give
\begin{equation}\label{eq:strip-checks}
\begin{gathered}
E'(1)>0,\qquad
C_bP_0<\frac12,\qquad 2C_bP_0<1,\\
C_gC_t<1,\\
1-E'(1/2)-\frac{h'(z^2)}{h(z^2)}>0.
\end{gathered}
\end{equation}
Because \(E''=-7/5<0\), the minimum of \(E'\) on \([0,1]\) is at
one. Hence \(E\) is increasing throughout the interval.

For \(x\le L\), equation \eqref{eq:D} gives \(D(x)=h(x)\), so
\begin{equation}\label{eq:lower-profile}
\varphi(x)=\frac{\sigma}{\sqrt{h_*}}h(x)e^{E(x)}
\le C_bh(x).
\end{equation}
For \(x\le1/2\), use \(h(x)\le\ln2\) and \(E(x)\le E(1/2)\) to get
\begin{equation}\label{eq:P0}
\varphi(x)\le P_0.
\end{equation}

For \(1/2<x<1\), away from the breakpoint \(z\),
\[
\frac{d}{dx}\ln\frac{\varphi(x)}x
=\frac{D'(x)}{2D(x)}+\frac{h'(x)}{2h(x)}+E'(x)-\frac1x.
\]
The second term is nonpositive, \(E'(x)\le E'(1/2)\), and
\(-1/x\le-1\).
On \([1/2,z]\), the first term is zero.
On \([z,1/\sqrt2]\), it equals
\[
x\frac{h'(x^2)}{h(x^2)}
\le\frac{h'(z^2)}{h(z^2)}.
\]
For this last bound, \(x\le1\), \(h'(x^2)\ge0\), and
\((h'/h)'=(h''h-(h')^2)/h^2<0\).
Above \(1/\sqrt2\), the first term is nonpositive.
Equation \eqref{eq:strip-checks} therefore proves that
\(\varphi(x)/x\) decreases on \([1/2,1)\). Continuity at \(z\)
and at one gives
\begin{equation}\label{eq:linear-profile}
\varphi(y)\le2P_0y\qquad(1/2\le y\le1).
\end{equation}

\subsection{The lower strip}
By symmetry, suppose \(x\le L\) and \(x\le y\).
If \(y\le1/2\), both coefficients in \eqref{eq:a} are one.
Thus \(r(x,y)=x\), and
\[
\varphi(x)\varphi(y)\le C_bP_0h(x)\le h(x)=K(x,y).
\]
If \(y\ge1/2\), then \(a(x)=1\), and
\[
r(x,y)=xk,\qquad k=y+a(y)(1-y)\in[y,1].
\]
Concavity \eqref{eq:concavity-scaling}, together with
\eqref{eq:lower-profile} and \eqref{eq:linear-profile}, gives
\[
K(x,y)=h(xk)\ge k h(x)\ge yh(x)
\ge\varphi(x)\varphi(y),
\]
where the last step uses \(2C_bP_0<1\).
This handles the whole lower strip, including \(x=0\).

\subsection{The upper strip}
For \(x\ge U\), both \(h(x)\) and \(h(x^2)\) are decreasing, since
\(U>1/\sqrt2\). Also \(1/x\le1/U\) and \(E(x)\le E(1)\).
Substituting these four bounds into \eqref{eq:phi} yields
\begin{equation}\label{eq:upper-linear}
\varphi(x)\le C_t x\qquad(x\ge U).
\end{equation}
For all \(0<y<1\) we claim
\[
\frac{D(y)}{h(y)}\le\frac{2\ln2}{h(z^2)}.
\]
For \(y\le1/2\), the left side is one.
For \(1/2\le y\le z\), use \(h(y)\ge h(z)\) and
\(h(z^2)\le2h(z)\), the latter being \eqref{eq:and}.
For \(y\ge z\), apply \eqref{eq:and} directly in \eqref{eq:D}.
It follows from \(E(y)\le E(1)\) that
\[
\varphi(y)\le C_g h(y).
\]
At the endpoints this holds by continuity. Since \(a(x)=0\) for
\(x\ge U\), we obtain
\[
\varphi(x)\varphi(y)\le C_tC_g\,xh(y)
\le xh(y)\le h(xy)=K(x,y).
\]
The last step is \eqref{eq:concavity-scaling}; the preceding step uses
\(C_tC_g<1\).
This handles the whole upper strip, even when \(y\) is in the
lower strip.

\subsection{The compact square}
To complete \eqref{eq:original-K}, it remains to check \([L,U]^2\).
The derivative of \(a(x)\) is singular at \(1/\sqrt2\), so we do
not differentiate it in the \(x\) coordinate. Instead, parameterize
the middle interval by \(t=a(x)\):
\begin{equation}\label{eq:middle-param}
x(t)=\frac1{t^2+\sqrt{(t^2-1)^2+1}},\qquad0\le t\le1.
\end{equation}
The defining equation for \(a\) rearranges to
\[
x^2+t^2x(1-x)=\frac12.
\]
Solving the quadratic and rationalizing gives
\eqref{eq:middle-param}, which remains valid at \(t=1\).
It moves continuously from \(1/\sqrt2\) to \(1/2\), with bounded
derivative. The lower and upper coordinate maps are
\[
x_{\mathrm{low}}(t)=L+(1/2-L)t,\qquad
x_{\mathrm{high}}(t)=1/\sqrt2+(U-1/\sqrt2)t.
\]
Their coefficients \(a\) are respectively one and zero.
The six unordered pairs of these three maps cover the square up
to symmetry. The interval verification proves that the gap is positive on
each of these six parameter squares.
Section~\ref{sec:certificates} describes the method, and
Appendix~\ref{app:derivatives} gives the derivative formulas.
Together with the two strips, this proves \eqref{eq:original-K}.

\subsection{The bridge}\label{sec:bridge}
For \(0\le y\le L\), equations \eqref{eq:concavity-scaling} and
\eqref{eq:lower-profile} imply
\[
h(Ay)-s(A)\varphi(y)\ge\bigl[A-s(A)C_b\bigr]h(y).
\]
The scalar checks give \(A-s(A)C_b>0\).
For \(L\le y\le A\), the one-variable interval verification proves
that \(h(Ay)-s(A)\varphi(y)>0\). This establishes
\eqref{eq:bridge} on the whole interval.
Section~\ref{sec:tail} now proves the stronger kernel
inequality \eqref{eq:K} everywhere.

\section{The mixture inequality}\label{sec:mixture}

It remains to prove \eqref{eq:M}.
This is where the chosen value \(m=0.61711475\) and the weights enter.
The profile \(\psi\) has already been shown to be admissible for
the sampling kernel.

\subsection{A change of variables}
Both sides of \eqref{eq:M} are symmetric in \(x,y\).
When \(0<x\le y\le1\) and \(xy<1\), write
\[
x=e^{-u},\quad y=e^{-v},\quad
u=t(1+\xi),\quad v=t(1-\xi),\quad t>0,\quad0\le\xi\le1.
\]
Explicitly, \(t=-\tfrac12\ln(xy)\) and
\(\xi=\ln(y/x)/[-\ln(xy)]\). Thus this change of variables covers
the entire relevant half-square.
Define
\begin{equation}\label{eq:GP-def}
G(u)=\frac{h(e^{-u})}{e^{-u}},\qquad
P(u)=\frac{\psi(e^{-u})}{e^{-u}},\qquad G(0)=P(0)=0.
\end{equation}
Here \(P(u)\) is a function, distinct from the probability
distributions \(P_i\) used in Section~\ref{sec:reduction}.
Dividing \eqref{eq:M} by \(2xy>0\) shows it is equivalent to
\begin{equation}\label{eq:F}
F(t,\xi)=mw_0G(2t)+mw_4P(u)P(v)
-\frac{G(u)+G(v)}2\ge0.
\end{equation}
It will be useful that
\begin{equation}\label{eq:k0}
k_0=mw_0=0.500000008685975>\frac12.
\end{equation}
The small positive excess over \(1/2\) controls the edge where one
of the original probabilities is one.

\subsection{Elementary estimates for \texorpdfstring{\(G\)}{G}}
Direct substitution gives
\[
G(u)=u-(e^u-1)\ln(1-e^{-u}).
\]
Using \(-\ln(1-r)=\sum_{k\ge1}r^k/k\) for \(0<r<1\),
and subtracting the two shifted series, yields
\begin{equation}\label{eq:Gseries}
G(u)=u+1-\sum_{j\ge1}\frac{e^{-ju}}{j(j+1)}.
\end{equation}
The series and its derivatives converge uniformly on compact
subintervals of \(u>0\). Therefore
\begin{equation}\label{eq:Gproperties}
\begin{gathered}
G'(u)=1+\sum_{j\ge1}\frac{e^{-ju}}{j+1}>0,\qquad
G''(u)=-\sum_{j\ge1}\frac{j\,e^{-ju}}{j+1}<0,\\
G(u)\ge u,\qquad
-G''(u)\le\sum_{j\ge1}e^{-ju}
=\frac1{e^u-1}\le\frac1u.
\end{gathered}
\end{equation}
For \(G(u)\ge u\), use
\(\sum_{j\ge1}1/[j(j+1)]=1\).

Set \(J(u)=G(u)+u\ln u\), with \(J(0)=0\).
Equation \eqref{eq:Gproperties} gives \(J''(u)\ge0\).
Also \(1-e^{-u}=u+O(u^2)\) and \(e^u-1=u+O(u^2)\), so
\[
G(u)=u-u\ln u+O(u^2|\ln u|),\qquad
\lim_{u\downarrow0}\frac{J(u)}u=1.
\]
Convexity implies that \(J(u)/u\) is nondecreasing, and hence
\begin{equation}\label{eq:Gsmall}
G(u)\ge u(1-\ln u).
\end{equation}
Convexity with \(J(0)=0\) also gives superadditivity:
if \(u,v\ge0\), then
\[
J(u)\le\frac{u}{u+v}J(u+v),\qquad
J(v)\le\frac{v}{u+v}J(u+v).
\]
Adding and rearranging proves
\begin{equation}\label{eq:Gdeficit}
G(u)+G(v)-G(u+v)
\le(u+v)h\left(\frac{u}{u+v}\right)\le2\sqrt{uv}.
\end{equation}
At \(u=v=0\), the statement follows by continuity.
For completeness, the last bound follows from concavity of \(\ln\):
\[
\frac{h(p)}2
=p\ln\frac1{\sqrt p}+(1-p)\ln\frac1{\sqrt{1-p}}
\le\ln\bigl(\sqrt p+\sqrt{1-p}\bigr).
\]
Multiplying by two and using \(\ln(1+r)\le r\) gives
\[
h(p)\le\ln\bigl(1+2\sqrt{p(1-p)}\bigr)
\le2\sqrt{p(1-p)}.
\]
Endpoint cases again follow by continuity.

\subsection{Small \texorpdfstring{\(t\)}{t}: the stronger tail supplies the gain}
Put \(T=0.001\). When \(t\le T\), both \(u,v\le2T\), and
\(e^{-u},e^{-v}\ge e^{-2T}>B\); the last inequality is included
in the scalar interval checks. On this range \(\psi=s\), so
\begin{equation}\label{eq:Ptail}
P(u)=\frac{\sqrt{h(e^{-2u})}}{e^{-u}}
=\sqrt{G(2u)}.
\end{equation}
Since \(u+v=2t\), equation \eqref{eq:Gdeficit} gives
\[
F(t,\xi)\ge(k_0-\tfrac12)G(2t)
+mw_4\sqrt{G(2u)G(2v)}-\sqrt{uv}.
\]
Using \eqref{eq:Gsmall} and \(2u,2v\le4T\), we have
\[
\sqrt{G(2u)G(2v)}
\ge2\sqrt{uv}\,[1-\ln(4T)].
\]
Thus
\begin{equation}\label{eq:smallt}
F(t,\xi)\ge(k_0-\tfrac12)G(2t)
+\bigl[2mw_4(1-\ln(4T))-1\bigr]\sqrt{uv}\ge0,
\end{equation}
because the scalar interval checks show that the bracket is positive. This proof includes \(u=0\) or \(v=0\) by limits.

\subsection{Large \texorpdfstring{\(t\)}{t}: a global lower bound for the profile}
Define
\begin{equation}\label{eq:Cmin}
C=\frac{\sigma e^{E(0)}}{\sqrt{h(x_*)}}.
\end{equation}
We claim \(D(x)\ge h(x)\). This is immediate for \(x\le z\).
For \(x\ge z\), the exact rational inequality \(z^2+z>1\)
implies \(1-x\le x^2\le x\). Because \(x\ge1/2\), symmetry and
concavity of \(h\) imply that its values throughout \([1-x,x]\)
are at least \(h(x)\). Consequently \(h(x^2)\ge h(x)\), and the
factor \(\ln2/h(z^2)\) in \(D\) is at least one.

Since \(E\) is increasing and \(\psi\ge\varphi\), we obtain
\[
\psi(x)\ge\varphi(x)\ge Ch(x),\qquad
P(u)\ge CG(u).
\]
For \(t\ge16\), one has \(u\ge t\), \(2t\ge u\), and \(G(u)\ge u\).
Using monotonicity of \(G\) in \eqref{eq:F},
\begin{align}
F(t,\xi)
&\ge (k_0-\tfrac12)G(u)
+\bigl[mw_4C^2G(u)-\tfrac12\bigr]G(v)\notag\\
&\ge0. \label{eq:larget}
\end{align}
The final step uses the scalar check
\[
16mw_4C^2-\frac12>0
\]
and \(G(u)\ge16\). This controls the entire unbounded tail of the
transformed domain.

\subsection{The compact rectangle and its difficult edge}
The remaining rectangle is
\[
[0.001,16]\times[0,1].
\]
The interval algorithm in Section~\ref{sec:certificates} proves
\(F>0\) on it. Two analytic estimates are used near \(\xi=1\),
where \(v=t(1-\xi)\) tends to zero and derivatives of \(G\) or
\(P\) are unbounded.

First, \(G(u)\le G(2t)\) and the product term is nonnegative, so
\begin{equation}\label{eq:edge1}
F(t,\xi)\ge(k_0-\tfrac12)G(2t)-\frac12G(v).
\end{equation}
Second, suppose
\begin{equation}\label{eq:edge-condition}
v\le-\ln B,\qquad G(v)\le[2mw_4CG(t)]^2.
\end{equation}
Then \eqref{eq:Ptail} gives
\(P(v)=\sqrt{G(2v)}\ge\sqrt{G(v)}\), while
\(P(u)\ge CG(u)\ge CG(t)\). Thus
\[
mw_4P(u)P(v)\ge mw_4CG(t)\sqrt{G(v)}
\ge \frac12G(v).
\]
This cancels the last term in \eqref{eq:edge1} and leaves
\begin{equation}\label{eq:edge2}
F(t,\xi)\ge(k_0-\tfrac12)G(2t)>0.
\end{equation}
At \(v=0\), the cancellation is the equality \(0=0\), so the
argument remains valid. These estimates avoid differentiating at
the singular edge.

The interval verification proves positivity throughout the remaining
compact rectangle, using these analytic estimates on boxes next to
the singular edge.
Together with \eqref{eq:smallt} and \eqref{eq:larget}, this proves
\(F\ge0\) for all \(t>0\), \(0\le\xi\le1\).
If \(xy=0\), both sides of \eqref{eq:M} vanish; if \(x=y=1\),
they also vanish. Symmetry handles \(x>y\).
We have therefore proved \eqref{eq:M} on the full unit square.
Proposition~\ref{prop:reduction}, with \(1-m=1531541/4000000\), now gives
Theorem~\ref{thm:main}.

\section{Interval verification}\label{sec:certificates}

A numerical grid checks only its sampled points. The calculations
here instead cover whole intervals or rectangles by bounds that
apply to every point in them. We describe the verification method here;
the derivative formulas appear in Appendix~\ref{app:derivatives}, and
the complete program appears in Appendix~\ref{app:code}.
The computation establishes the scalar signs used above, the compact
kernel inequality, the bridge inequality, and the compact mixture
inequality. These are inequalities over entire domains, rather than
numerical values of a root. Newton's method evaluates
\eqref{eq:stationary}, but does not establish these domain-wide
inequalities. The computational dependence of Theorem~\ref{thm:main}
is precisely this remaining verification. All four parts were completed with the exact parameters
in \eqref{eq:parameters}.

\subsection{Outward-rounded intervals}
An interval \([\ell,r]\) represents all real numbers between its
endpoints. Addition, subtraction, multiplication, and division
are evaluated with 35 significant decimal digits, rounding every
lower bound down and every upper bound up. Division is permitted
only when the denominator interval excludes zero.
For multiplication, all four endpoint products are considered.
For a logarithm, exponential, or square root, the correctly
rounded Decimal result is enlarged to the neighboring representable
value on each side. This uses the documented Decimal
guarantees~\cite{decimal}.

Every constant is supplied as an integer or a decimal string.
The center \(x_*=0.690908\) is supplied exactly; no root
isolation is needed for this parameter.
For \(h\) on an interval, concavity shows that its minimum is at
an endpoint and its maximum is either at an endpoint or at
\(1/2\), if \(1/2\) lies in the interval.
For \(G\), monotonicity allows evaluation at the two endpoints;
for \(G'\), the order is reversed because \(G''<0\).
All these monotonicity statements have been proved above.

\subsection{The acceptance test for a box}
Let \(f\) be a function on a box
\(I\times J\), and choose a representable center \((c,d)\).
Let \(\rho_I=\max(c-\inf I,\sup I-c)\), with an analogous
definition for \(\rho_J\). Suppose interval evaluation bounds
\(|f_x|\le M_x\) and \(|f_y|\le M_y\) throughout the box.
Integrating along horizontal and vertical segments gives
\begin{equation}\label{eq:meanvalue}
f(x,y)\ge f(c,d)-M_x\rho_I-M_y\rho_J
\quad((x,y)\in I\times J).
\end{equation}
The right side is evaluated with downward rounding.
The one-dimensional bridge uses the same formula with one term.

For a piecewise differentiable function, the segment can be
split at its finitely many breakpoints and the same bound applies
on each piece. The code intersects each interval with all relevant
pieces and takes the interval hull of their values and derivatives.
It includes both derivative branches at a join. The parameterized
kernel has no square-root derivative singularity, and mixture
boxes touching \(v=0\) are handled by the edge estimates instead.

For each pending box, the algorithm proceeds as follows:
\begin{enumerate}
\item For the mixture, first try the two analytic edge bounds.
\item Otherwise, evaluate the gap directly by interval arithmetic.
Accept if its lower endpoint is positive.
\item If necessary, evaluate the gap at the center and apply
\eqref{eq:meanvalue}. Accept if the resulting lower bound is positive.
\item If the center has a strictly negative upper bound, stop and
report a counterexample. If positivity is still unresolved, bisect
the box and retain \emph{both} children.
\end{enumerate}
The split uses the larger estimated derivative error when available,
or the larger coordinate width otherwise. This is an efficiency
choice; retaining both children is what preserves coverage.
Resource limits cause failure, not acceptance.
For mixture boxes with \(v=0\) not yet handled by an edge bound,
the code subdivides the \(\xi\) interval without differentiating.

The initial bridge interval is \([L,A]\).
The kernel starts with six unit parameter squares, one for each
unordered pair of the maps in Section~\ref{sec:kernel}.
The mixture starts with nine rectangles formed by the \(t\)-knots
\[
0.001,\ 0.01,\ 0.1,\ 0.3,\ 0.5,\ 1,\ 2,\ 4,\ 8,\ 16
\]
and the full \(\xi\)-interval \([0,1]\).
The stack is exhausted only after every descendant box has been
accepted. Hence a completed run proves full coverage.

The implementation is reproduced in full in Appendix~\ref{app:code}.
It uses only the Python standard library and is run without
optimization flags so that its assertions remain active. The analytic estimates outside the compact domains
are essential: the finite computation alone would not cover the
whole unit square. The proof of Theorem~\ref{thm:main} therefore consists
of the entropy reduction, those estimates, and this interval
verification together.

\section{A global coupling with a finite entropy gain}\label{sec:global}

We now couple entire sets, imposing only that both input marginals
are uniform on \(\F\). The construction uses relative-entropy
minimization and proves a quantitative gain over independent sampling.
Its proof does not use the interval computations of the preceding
sections.

The idea is to reduce the imbalance in how often different unions
occur. Under independent sampling, a set with many representations
as a union is more likely. Weighting each representation inversely
by that multiplicity makes the union uniform, but changes the input
marginals. We restore the uniform input marginals by a relative-entropy
projection. The proof compares this projection with an explicit small
perturbation of independent sampling that already preserves both
marginals. Removing row and column averages identifies an admissible
perturbation, and a four-entry calculation proves that its entropy gain
is strictly positive.

\subsection{Marginal constraints}
In the coordinatewise construction of Section~\ref{sec:reduction}, each next bit keeps its
native conditional law even after both input prefixes are revealed:
\[
\Pr(X_i=1\mid X_{<i},Y_{<i})=\Pr(X_i=1\mid X_{<i}).
\]
Here the native law already depends on the entire prefix.
For a global coupling, we impose only the full marginal conditions
\[
\Pr(X=A)=\Pr(Y=A)=\frac1N\qquad(A\in\F).
\]
The displayed conditional restriction is not imposed.
The joint law can use every element of both sets and the structure of
the entire family. General whole-vector couplings already occur in
Liu's formulation~\cite{liu}.

For the rest of the section assume \(N\ge2\), let \(T=\bigcup\F\in\F\),
and let \(P_0(A,B)=1/N^2\) be the independent coupling.
For \(S\in\F\), define
\[
r(S)=|\{(A,B)\in\F^2:A\cup B=S\}|,\qquad \nu_0(S)=\frac{r(S)}{N^2}.
\]
Every \(r(S)\) is positive because \((S,S)\) is a representation.
The pair counts sum to \(N^2\).
We count ordered pairs throughout this section.

\subsection{An auxiliary distribution and a balanced coupling}
Consider the strictly positive probability distribution
\[
Q(A,B)=\frac{1}{N\,r(A\cup B)}.
\]
One way to sample it is to select \(S\in\F\) uniformly and then select
one of its \(r(S)\) ordered representations uniformly.
Its union is uniform. Its two input marginals need not be uniform.

For probability distributions \(P,Q\) on a finite set, with \(Q>0\),
write
\[
D(P\Vert Q)=\sum_\omega P(\omega)\ln\frac{P(\omega)}{Q(\omega)},
\]
with a zero summand when \(P(\omega)=0\).
The inequality \(\ln t\le t-1\) gives \(D(P\Vert Q)\ge0\).
For example, applying it to \(t=Q(\omega)/P(\omega)\) on the
positive support of \(P\) gives
\[
D(P\Vert Q)\ge\sum_{P(\omega)>0}(P(\omega)-Q(\omega))\ge0.
\]

Let \(\mathcal C\) be the set of joint laws with both input marginals
uniform on \(\F\). Define
\[
P_*=\mathop{\rm argmin}_{P\in\mathcal C}D(P\Vert Q).
\]
This definition is an ordinary finite optimization over an \(N\)-by-\(N\)
matrix. The theorem below does not require enumerating all families or
numerically solving this optimization.

\begin{lemma}[Existence and the scaling formula]\label{lem:global-scaling}
The minimizer exists, is unique and strictly positive, and has the form
\[
P_*(A,B)=\frac{a(A)a(B)}{r(A\cup B)}
\]
for positive factors \(a(A)\) chosen to make every row and column sum
to \(1/N\). Thus it is the inverse-multiplicity coupling with
exponent exactly one.
\end{lemma}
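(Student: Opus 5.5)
The plan is the standard I-projection argument with matrix scaling, organized in four steps.

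\emph{Existence and uniqueness.} The set \(\mathcal C\) is a nonempty compact convex polytope; it contains \(P_0\). Since \(Q>0\), the map \(P\mapsto D(P\Vert Q)\) is continuous on \(\mathcal C\) and strictly convex, because \(t\ln t\) is strictly convex and the term \(-\sum P\ln Q\) is linear. Hence a minimizer exists and is unique.

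\emph{Strict positivity.} Suppose the minimizer \(P_*\) has some zero entry. Move toward \(P_0\), which is strictly positive and lies in \(\mathcal C\), by setting \(P_\varepsilon=(1-\varepsilon)P_*+\varepsilon P_0\in\mathcal C\). The derivative of \(D(P_\varepsilon\Vert Q)\) at \(\varepsilon=0^+\) contains, for each zero entry \(\omega\), a term \(P_0(\omega)\ln(\varepsilon P_0(\omega)/Q(\omega))\to-\infty\). The contributions from positive entries stay bounded. So the derivative is \(-\infty\), and \(D\) strictly decreases for small \(\varepsilon\), contradicting minimality. Thus \(P_*>0\).

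\emph{Scaling form.} Because \(P_*\) lies in the relative interior of the positive orthant, the Lagrange conditions apply with the affine constraints \(\sum_B P(A,B)=1/N\) and \(\sum_A P(A,B)=1/N\). Stationarity gives
\[
\ln\frac{P_*(A,B)}{Q(A,B)}+1=\lambda(A)+\mu(B).
\]
Hence \(P_*(A,B)=Q(A,B)\,\alpha(A)\beta(B)\) with \(\alpha,\beta>0\). Absorbing the factor \(1/N\) from \(Q\), we obtain \(P_*(A,B)=\alpha'(A)\beta(B)/r(A\cup B)\).

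\emph{Symmetry, \(\alpha'=\beta\).} The problem is invariant under transposing \(P\mapsto P^{\top}\), since \(Q\) is symmetric (\(r(A\cup B)=r(B\cup A)\)) and \(\mathcal C\) is transpose-invariant. By uniqueness, \(P_*=P_*^{\top}\). This gives \(\alpha'(A)\beta(B)=\alpha'(B)\beta(A)\) for all \(A,B\), because the common factor \(r(A\cup B)\) cancels. So \(\alpha'/\beta\) is a constant \(c>0\). Setting \(a=\sqrt{c}\,\beta\) yields \(P_*(A,B)=a(A)a(B)/r(A\cup B)\). The row and column sums equal \(1/N\) because \(P_*\in\mathcal C\).

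\emph{Main obstacle.} The most care is needed in the positivity step, which is what justifies the interior Lagrange conditions. The one-sided derivative computation must be made rigorous. I would write \(D(P_\varepsilon\Vert Q)-D(P_*\Vert Q)\) explicitly. The zero-entry terms equal \(\varepsilon P_0\ln(\varepsilon P_0/Q)\), which is \(\varepsilon\ln\varepsilon\) times a positive constant. The remaining terms are \(O(\varepsilon)\) by differentiability of \(t\ln t\) at positive \(t\). So the difference is negative for small \(\varepsilon\). If one prefers to avoid Lagrange multipliers, an alternative is to invoke Sinkhorn/matrix-scaling existence for the positive matrix \(1/r(A\cup B)\) with uniform marginals. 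A direct computation then shows that \(D(P\Vert Q)-D(P_{\text{scaled}}\Vert Q)=D(P\Vert P_{\text{scaled}})\ge 0\) for \(P\in\mathcal C\); the linear term vanishes because the log of a scaled matrix is a row function plus a column function. Uniqueness then follows from \(D(P\Vert P_{\text{scaled}})=0\iff P=P_{\text{scaled}}\).
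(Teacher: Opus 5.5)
Your proposal is correct and follows the paper's proof essentially step for step. The steps are: existence from compactness of $\mathcal C\ni P_0$ and uniqueness from strict convexity; positivity by mixing toward $P_0$, so that the $\varepsilon\ln\varepsilon$ terms at zero entries dominate the $O(\varepsilon)$ changes elsewhere; the multiplier equation $\ln(P_*/Q)+1=\lambda(A)+\mu(B)$; and transpose invariance plus uniqueness to symmetrize the factors, where your $a=\sqrt{c}\,\beta=\sqrt{\alpha'\beta}$ is exactly the paper's $a(A)=\sqrt{u_Av_A}$.
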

\begin{proof}
The feasible set is a nonempty compact polytope: it contains \(P_0\).
The objective is continuous, using \(0\ln0=0\), so it has a minimum.
Strict convexity of \(t\ln t\) gives uniqueness.

The minimizer cannot have a zero entry. If it did, replace it by
\((1-s)P_*+sP_0\), which preserves both marginals.
At every zero entry, the change in \(P\ln(P/Q)\) contains a positive
constant times \(s\ln s\); at initially positive entries it is \(O(s)\).
For sufficiently small positive \(s\), the negative \(s\ln s\) terms
dominate, contradicting minimality.

At an interior minimum, Lagrange multipliers for the row and column
constraints give
\[
\ln\frac{P_*(A,B)}{Q(A,B)}+1=\alpha_A+\beta_B.
\]
One redundant marginal constraint may be deleted before applying the
multiplier rule. Exponentiating and absorbing constants yields
\(P_*(A,B)=u_Av_B/r(A\cup B)\), with positive \(u_A,v_B\).
Both the objective and the constraints are invariant under transpose,
so uniqueness gives \(P_*(A,B)=P_*(B,A)\).
Since all denominators are positive, \(u_Av_B=u_Bv_A\); hence
\(u_A/v_A\) is constant. Taking \(a(A)=\sqrt{u_Av_A}\) proves the formula.
\end{proof}

\subsection{Relative-entropy identities}
If \(P\in\mathcal C\) has union law \(\nu_P\), grouping the terms by
\(S=A\cup B\) gives
\begin{equation}\label{eq:KL-fibers}
D(P\Vert Q)
=D(\nu_P\Vert\operatorname{Unif}(\F))
 + \sum_S \nu_P(S)
 D\bigl(P(\,\cdot\mid A\cup B=S)\Vert
             \operatorname{Unif}(\text{representations of }S)\bigr).
\end{equation}
Indeed, inside each nonzero union fiber,
\[
\frac{P(A,B)}{Q(A,B)}
=\frac{\nu_P(S)}{1/N}\,
  \frac{P(A,B\mid S)}{1/r(S)}.
\]
Taking logarithms, multiplying by \(P(A,B)\), and summing proves
\eqref{eq:KL-fibers}. Fibers of zero \(\nu_P\)-mass contribute zero.
Consequently
\begin{equation}\label{eq:global-data-processing}
\ln N-H(\nu_P)\le D(P\Vert Q).
\end{equation}
For \(P=P_0\), the conditional law on each union fiber is uniform, so
\begin{equation}\label{eq:D0}
D(P_0\Vert Q)=\ln N-H(\nu_0).
\end{equation}

There is also an exact projection identity:
\begin{equation}\label{eq:global-pythagoras}
D(P_0\Vert Q)=D(P_0\Vert P_*)+D(P_*\Vert Q).
\end{equation}
To check it, subtract the right side from the left side. The result is
\[
\sum_{A,B}(P_0(A,B)-P_*(A,B))
             \ln\frac{P_*(A,B)}{Q(A,B)}.
\]
By the scaling formula, the logarithm is a sum of a function of \(A\),
a function of \(B\), and a constant. Equal row sums, equal column sums,
and equal total mass make the displayed expression zero.
In particular, \eqref{eq:global-data-processing}--\eqref{eq:global-pythagoras}
imply
\[
H(\nu_*)-H(\nu_0)\ge D(P_0\Vert P_*).
\]
The next argument gives a fully explicit lower bound.

\subsection{An explicit quantitative improvement}
Set
\[
c_{AB}=\ln r(A\cup B),\qquad
\bar c_A=\frac1N\sum_B c_{AB},\qquad
\bar c=\frac1N\sum_A\bar c_A,
\]
and define the matrix with its row and column means removed:
\[
R_{AB}=c_{AB}-\bar c_A-\bar c_B+\bar c,\qquad
V=\frac1{N^2}\sum_{A,B}R_{AB}^2.
\]
Symmetry of \(c\) explains why the row and column averages have the
same notation. Every row and column of \(R\) sums to zero.
With expectation taken under \(P_0\), these facts imply
\begin{equation}\label{eq:orthogonal}
\mathbb E_0R=0,\qquad \mathbb E_0(Rc)=\mathbb E_0(R^2)=V.
\end{equation}
For the second identity, substitute
\(c_{AB}=R_{AB}+\bar c_A+\bar c_B-\bar c\).
Each term involving a row mean, a column mean, or a constant vanishes.

\begin{theorem}[Finite gain for the global coupling]\label{thm:global-gain}
Let \(\nu_*\) be the union law of the coupling in
Lemma~\ref{lem:global-scaling}. Put \(\varepsilon=1/(8\ln N)\).
Then
\begin{equation}\label{eq:global-gain}
H(\nu_*)-H(\nu_0)
\ge \varepsilon(1-\varepsilon)V
\ge \frac{V}{16\ln N}
\ge \frac{\ln^2(2N-1)}{64N^2\ln N}>0.
\end{equation}
All logarithms and entropies in this formula use natural logarithms.
\end{theorem}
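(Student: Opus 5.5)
The plan is to compare the projection \(P_*\) with an explicit perturbation of independent sampling that keeps both marginals. Set
\[
P_\varepsilon(A,B)=P_0(A,B)\bigl(1-\varepsilon R_{AB}\bigr)=\frac{1-\varepsilon R_{AB}}{N^2}.
\]
Every row and column of \(R\) sums to zero, so \(P_\varepsilon\) has uniform marginals. We also need \(P_\varepsilon\ge0\). Since \(1\le r(S)\le N^2\), every \(c_{AB}\) lies in \([0,2\ln N]\), and so do the averages \(\bar c_A\) and \(\bar c\). Hence \(|R_{AB}|\le4\ln N\), and \(\varepsilon|R_{AB}|\le1/2\). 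Thus \(P_\varepsilon\in\mathcal C\).

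By \eqref{eq:global-data-processing} and minimality of \(P_*\),
\[
H(\nu_*)\ge\ln N-D(P_*\Vert Q)\ge\ln N-D(P_\varepsilon\Vert Q).
\]
Combined with \eqref{eq:D0}, this gives
\[
H(\nu_*)-H(\nu_0)\ge D(P_0\Vert Q)-D(P_\varepsilon\Vert Q).
\]
Since \(\ln Q(A,B)=-\ln N-c_{AB}\), we can write
\[
D(P\Vert Q)=\sum P\ln P+\ln N+\sum P\,c.
\]

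The linear part of the difference is \(\sum(P_0-P_\varepsilon)c=\varepsilon\,\E_0(Rc)=\varepsilon V\), by \eqref{eq:orthogonal}.

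For the entropy part, \(\ln P_\varepsilon=-2\ln N+\ln(1-\varepsilon R)\). Using \(\E_0R=0\), we get
\[
\sum P_\varepsilon\ln P_\varepsilon-\sum P_0\ln P_0=\E_0\bigl[(1-\varepsilon R)\ln(1-\varepsilon R)\bigr].
\]
The inequality \(\ln(1+x)\le x\) gives \((1+x)\ln(1+x)\le x+x^2\) for \(x>-1\). Applying it with \(x=-\varepsilon R\) and using \(\E_0R=0\) again bounds this expectation by \(\varepsilon^2V\). Therefore the gain is at least \(\varepsilon V-\varepsilon^2V=\varepsilon(1-\varepsilon)V\), which is the first inequality.

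The second inequality needs only \(1-\varepsilon\ge1/2\), that is \(\ln N\ge1/4\). This holds because \(N\ge2\).

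The main step is the lower bound \(V\ge\ln^2(2N-1)/(4N^2)\), which I would prove by the four-entry calculation. Let \(M\) be a member of minimal size and \(T=\bigcup\F\). Then \(M\ne T\) because \(N\ge2\).

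\begin{itemize}
\item Any representation \(A\cup B=M\) has \(A,B\subseteq M\), so \(A=B=M\) by minimality. Hence \(r(M)=1\) and \(c_{MM}=0\).
\item The pairs \((T,B)\) and \((A,T)\) give at least \(2N-1\) distinct representations of \(T\). Hence \(c_{MT}=c_{TM}=c_{TT}=\ln r(T)\ge\ln(2N-1)\).
\end{itemize}

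In the double difference \(R_{MM}-R_{MT}-R_{TM}+R_{TT}\), all row and column means cancel. It therefore equals
\[
c_{MM}-c_{MT}-c_{TM}+c_{TT}=-\ln r(T),
\]
whose absolute value is at least \(\ln(2N-1)\).

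These are four distinct entries of \(R\). By Cauchy--Schwarz, their squares sum to at least \(\ln^2(2N-1)/4\). Dividing by \(N^2\) gives the bound on \(V\). Substituting it into \(V/(16\ln N)\) yields the final expression, which is positive since \(2N-1\ge3\).
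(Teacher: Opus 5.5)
Your proof is correct and follows the paper's argument essentially step for step. It uses the same comparison coupling \(P_\varepsilon=(1-\varepsilon R)/N^2\), the same use of minimality together with \eqref{eq:global-data-processing} and \eqref{eq:D0}, and the same four-entry Cauchy--Schwarz bound on \(V\); your estimate \(\E_0[(1-\varepsilon R)\ln(1-\varepsilon R)]\le\varepsilon^2V\) is just the paper's bound \(D(P_\varepsilon\Vert P_0)\le\varepsilon^2V\) written out, and taking \(M\) of minimal cardinality instead of inclusion-minimal changes nothing.
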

\begin{proof}
Since \(1\le r(S)\le N^2\), every \(c_{AB}\) lies between zero and
\(2\ln N\), as do its row and overall means. Therefore
\[
|R_{AB}|\le4\ln N.
\]
Consider the explicitly balanced comparison coupling
\[
P_\varepsilon(A,B)=\frac{1-\varepsilon R_{AB}}{N^2}.
\]
Its entries are at least \(1/(2N^2)\), and its row and column sums
are \(1/N\), by the zero row and column sums of \(R\).
Also \(0<\varepsilon<1/2\), since \(N\ge2\) and \(\ln2>1/2\).
The last elementary inequality follows by integrating \(1/t>1/2\)
over \(1<t<2\).

For any \(P\) and positive \(P_0\), the inequality \(\ln t\le t-1\)
also gives
\[
D(P\Vert P_0)
\le \sum_\omega\frac{(P(\omega)-P_0(\omega))^2}{P_0(\omega)}.
\]
To see the simplification, the immediate upper bound is
\(\sum_\omega P(\omega)(P(\omega)/P_0(\omega)-1)\);
expanding the square gives the same expression because both
distributions have total mass one.
For our comparison coupling this yields
\[
D(P_\varepsilon\Vert P_0)\le\varepsilon^2V.
\]
Moreover \(\ln(P_0(A,B)/Q(A,B))=c_{AB}-\ln N\). Consequently
\begin{align*}
D(P_\varepsilon\Vert Q)
&=D(P_\varepsilon\Vert P_0)
  +\mathbb E_\varepsilon(c-\ln N)\\
&\le \varepsilon^2V+\mathbb E_0(c-\ln N)
                -\varepsilon\mathbb E_0(Rc)\\
&=D(P_0\Vert Q)-\varepsilon(1-\varepsilon)V.
\end{align*}
Minimality gives \(D(P_*\Vert Q)\le D(P_\varepsilon\Vert Q)\).
Now use \eqref{eq:global-data-processing} and \eqref{eq:D0}.
This proves the first inequality in \eqref{eq:global-gain}.
Since \(1-\varepsilon\ge1/2\), the second follows as well.

To prove the remaining bound on \(V\), choose a minimal member
\(M\in\F\) under inclusion. Its only union representation is \((M,M)\),
so \(r(M)=1\). The largest member \(T\) has at least \(2N-1\)
representations: all pairs \((T,A)\) and \((A,T)\), counting \((T,T)\)
only once. Since \(N>1\), \(M\ne T\).
The row and column averages cancel in the four-entry difference:
\[
R_{MM}+R_{TT}-R_{MT}-R_{TM}
=c_{MM}+c_{TT}-c_{MT}-c_{TM}=-\ln r(T).
\]
The Cauchy--Schwarz inequality therefore gives
\[
\ln^2 r(T)
\le4\bigl(R_{MM}^2+R_{TT}^2+R_{MT}^2+R_{TM}^2\bigr)
\le4N^2V.
\]
Substitute \(r(T)\ge2N-1\) and combine the bounds.
\end{proof}

The comparison coupling itself also has a provable gain. If its union
law is \(\nu_\varepsilon\), write
\[
g(S)=\mathbb E_0(R\mid A\cup B=S),\qquad
\nu_\varepsilon(S)=\nu_0(S)(1-\varepsilon g(S)).
\]
Expanding relative entropy gives
\[
H(\nu_\varepsilon)-H(\nu_0)
=\varepsilon V-D(\nu_\varepsilon\Vert \nu_0).
\]
The same square bound, followed by
\((\mathbb E_0(R\mid S))^2\le\mathbb E_0(R^2\mid S)\), shows
\[
D(\nu_\varepsilon\Vert \nu_0)\le\varepsilon^2V.
\]
Thus its entropy increase is also at least
\(\varepsilon(1-\varepsilon)V\).
The affine formula gives an explicit alternative to solving the
matrix-scaling optimization.

\subsection{Infinitesimal entropy gain}
For \(\theta\) near zero, minimizing
\[
D(P\Vert P_0)+\theta\,\mathbb E_Pc
\qquad(P\in\mathcal C)
\]
produces the one-parameter family
\(P_\theta(A,B)=a_\theta(A)a_\theta(B)e^{-\theta c_{AB}}\).
Write \(\nu_\theta\) for its union law. At zero, \(a_0(A)=1/N\).
The log-scaling equations are differentiable there: their Jacobian
with respect to the log factors is
\(N^{-1}I+N^{-2}\mathbf1\mathbf1^{\mathsf T}\), which is positive
definite. The implicit function theorem thus justifies differentiation.
Writing \(b_A=(d/d\theta)\ln a_\theta(A)|_0\), differentiating the
row constraints gives
\[
b_A+\bar b=\bar c_A,\qquad
\bar b=\bar c/2,\qquad
\left.\frac{d}{d\theta}P_\theta(A,B)\right|_0=-\frac{R_{AB}}{N^2}.
\]
Differentiating union entropy, and using
\(\ln \nu_0(S)=\ln r(S)-2\ln N\), now gives
\[
\left.\frac{d}{d\theta}H(\nu_\theta)\right|_0
=\mathbb E_0(Rc)=V.
\]
Theorem~\ref{thm:global-gain} strengthens this infinitesimal statement:
the fixed exponent \(\theta=1\) has a finite, explicitly bounded gain.

\subsection{A consequence for element frequencies}
The global coupling can replace the independent component of the
entropy mixture from Section~\ref{sec:reduction}. This gives an additional correction depending
on the family size.

\begin{proposition}[A positive correction for each finite size]
\label{prop:global-frequency}
With the parameters of Theorem~\ref{thm:main}, put
\(c_0=0.38288525\) and \(k_0=mw_0=0.500000008685975\).
For a union-closed family of size \(N\ge2\), its maximum frequency
satisfies
\[
\boxed{\displaystyle
p\ \ge\ c_0+\frac{k_0V}{16(\ln N)^2}
\ \ge\ c_0+
\frac{k_0\ln^2(2N-1)}{64N^2(\ln N)^2}
\ >\ c_0.}
\]
Here \(V\) is the double-centered log-multiplicity variance defined
above.
\end{proposition}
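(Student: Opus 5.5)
The plan is to rerun the final step of Proposition~\ref{prop:reduction} with the independent rule replaced by the global coupling of Lemma~\ref{lem:global-scaling}, and then feed in the quantitative gain of Theorem~\ref{thm:global-gain}. The pointwise inequalities \eqref{eq:K}--\eqref{eq:M} stay as they are. First I will note that, with \(m,w_0,w_4\) from \eqref{eq:parameters}, the argument of Section~\ref{sec:reduction} proves, before any use of union closure,
\[
w_0H(Z_0)+w_4H(Z_4)\ \ge\ \frac{1-p}{m}\ln N .
\]
Here \(Z_0\) is the union of two independent uniform members of \(\F\), so its law is exactly \(\nu_0\) from Section~\ref{sec:global}.

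Next I will let \(Z_*\) be the union of a pair drawn from \(P_*\), so its law is \(\nu_*\). Theorem~\ref{thm:global-gain} gives \(H(Z_*)\ge H(Z_0)+V/(16\ln N)\). Substituting this into the display yields
\[
w_0H(Z_*)+w_4H(Z_4)\ \ge\ \frac{1-p}{m}\ln N+\frac{w_0V}{16\ln N}.
\]
By union closure, \(Z_*\) and \(Z_4\) both take values in \(\F\), so the left side is at most \(\ln N\). The two rules may live on separate probability spaces, as remarked after the reduction, since only their entropies are averaged.

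Then I divide by \(\ln N>0\), using \(N\ge2\), and multiply by \(m\). This gives \(1-p\le m-mw_0V/(16\ln^2N)\). Since \(1-m=c_0\) and \(mw_0=k_0\), this is the first boxed inequality. For the second, I use the middle and last parts of \eqref{eq:global-gain}, which give \(V\ge\ln^2(2N-1)/(4N^2)\). Strict positivity follows because \(2N-1\ge3\) and \(k_0>0\).

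I expect no real obstacle. The one point to check is that the entropy mixture genuinely uses the \emph{unconditioned} entropy \(H(Z_0)\), which is bounded below through the chain rule, and that swapping in a larger quantity \(H(Z_*)\) preserves the inequality. This holds because the bound only requires a lower bound on \(H(Z_0)\) and an upper bound \(\ln N\) on the replacement.
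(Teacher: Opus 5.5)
Your proposal is correct and matches the paper's proof step for step. You substitute \(H(Z_*)\ge H(Z_0)+V/(16\ln N)\) into the reduction inequality \(w_0H(Z_0)+w_4H(Z_4)\ge\frac{1-p}{m}\ln N\), bound \(w_0H(Z_*)+w_4H(Z_4)\) by \(\ln N\) using union closure, multiply by \(m/\ln N\), and finish with \(V\ge\ln^2(2N-1)/(4N^2)\) from Theorem~\ref{thm:global-gain}.
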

\begin{proof}
Let \(Z_0\) and \(Z_4\) be the independent and shared-randomness unions
from Section~\ref{sec:reduction}; thus \(Z_0\) has law \(\nu_0\).
Let \(Z_*\) have the union law \(\nu_*\) of the global coupling.
All three take values in \(\F\).
The entropy reduction already proved
\[
w_0H(Z_0)+w_4H(Z_4)\ge \frac{1-p}{m}\ln N.
\]
By Theorem~\ref{thm:global-gain},
\(H(Z_*)\ge H(Z_0)+V/(16\ln N)\).
Weight this inequality by \(w_0\), then use the entropy upper bound
and \(w_0+w_4=1\):
\[
\ln N
\ge w_0H(Z_*)+w_4H(Z_4)
\ge \frac{1-p}{m}\ln N+\frac{w_0V}{16\ln N}.
\]
Multiply by \(m/\ln N>0\) and rearrange. This proves the first
inequality in the proposition. The estimate
\(V\ge\ln^2(2N-1)/(4N^2)\), proved above, supplies the second;
its rightmost correction is strictly positive.
\end{proof}

\subsection{The remaining quantitative question}
Proposition~\ref{prop:global-frequency} gives a positive correction for
each finite family size. Its size-only correction tends to zero as
\(N\) tends to infinity, so the estimate does not establish a larger
absolute constant than \(c_0\).
An extension would require a stronger relation between the entropy gain
and the element frequencies. In particular, the present estimates do
not determine whether the gain can remain comparable to \(\ln N\)
in families whose maximum frequency is small.

There is a useful exact check on this interpretation.
For any coupling with uniform input marginals, union closure gives
\(H(X\cup Y)\le\ln N\).
Equality forces the union to be uniform on \(\F\). Its expected
cardinality then equals that of \(X\).
Because \(X\subseteq X\cup Y\), the nonnegative difference of
cardinalities has expectation zero; hence \(X=X\cup Y\) almost surely.
Applying the same argument to \(Y\) gives \(X=Y\) almost surely.
Conversely that diagonal coupling achieves equality.
Since \(P_*\) is strictly positive and \(N>1\), it is not diagonal,
so its improved entropy still lies strictly below \(\ln N\).

\section*{Acknowledgments}
Part of this research was carried out at Axiom Math. The author
thanks Evan Chen, Jesse Thorner, Vasily Ilin, Simon Mahns, and Ken Ono
for fruitful discussions about AI for mathematics.
GPT-6 Astra served as an AI assistant in the research and preparation of
this manuscript.

\appendix
\section{Derivative formulas for the interval bounds}\label{app:derivatives}

This appendix makes the mean-value calculations independently
checkable. Each formula is an ordinary derivative on a smooth piece;
at joins, the code takes the hull of all applicable one-sided
derivative intervals.

\subsection{The profiles and the bridge}
The derivatives of \(E\) are
\[
E'(x)=-\frac{h'(x_*)}{2h(x_*)}-\frac75(x-x_*),
\qquad E''(x)=-\frac75.
\]
On the three pieces of \(D\), respectively,
\[
D'(x)=h'(x),\qquad D'(x)=0,\qquad
D'(x)=\frac{2x\ln2}{h(z^2)}h'(x^2).
\]
Logarithmic differentiation of \eqref{eq:phi} gives
\begin{equation}\label{eq:phiprime}
\varphi'(x)=\varphi(x)
\left[\frac{D'(x)}{2D(x)}+\frac{h'(x)}{2h(x)}+E'(x)\right].
\end{equation}
The compact checks avoid \(x=0,1\), where this quotient formula
is not used. Also
\[
s'(x)=\frac{x\,h'(x^2)}{s(x)}.
\]
On the transition interval,
\begin{align*}
\psi'(x)
&=(1-\theta(x))\varphi'(x)+\theta(x)s'(x)
+\frac{s(x)-\varphi(x)}{B-A}.
\end{align*}
Outside that interval, use \(\varphi'\) or \(s'\) as appropriate.
For the bridge gap \(b(y)=h(Ay)-s(A)\varphi(y)\),
\[
b'(y)=A h'(Ay)-s(A)\varphi'(y).
\]
These formulas are implemented by \texttt{uphi}, \texttt{sfun},
\texttt{psi}, and \texttt{bridge}. The name \texttt{uphi} denotes
the profile \(\varphi\) without a cutoff.

\subsection{The transformed mixture}
The derivative of \(G\) also has the convenient closed form
\[
G'(u)=-\frac{\ln(1-e^{-u})}{e^{-u}},
\]
obtained either by differentiation of \eqref{eq:GP-def} or by
summing the series for \(G'\).
For \(x=e^{-u}\), the chain rule gives
\[
P'(u)=\frac{\psi(x)}x-\psi'(x).
\]
In the code, the functions \texttt{GP} and \texttt{Psi} return
enclosures for \(G'\) and for \((P,P')\), respectively.

Put \(u=t(1+\xi)\), \(v=t(1-\xi)\).
Differentiating \eqref{eq:F} yields
\begin{align}
F_t={}&2k_0G'(2t)\notag\\
&+mw_4\bigl[(1+\xi)P'(u)P(v)
 +(1-\xi)P(u)P'(v)\bigr]\notag\\
&-\tfrac12\bigl[(1+\xi)G'(u)+(1-\xi)G'(v)\bigr],
\label{eq:Ft}\\
F_\xi={}&t\left\{
mw_4\bigl[P'(u)P(v)-P(u)P'(v)\bigr]
-\tfrac12\bigl[G'(u)-G'(v)\bigr]\right\}.
\label{eq:Fxi}
\end{align}
Every term on the right is evaluated over the entire box for the
mean-value bound. Breakpoints in \(\psi(e^{-u})\) are handled
internally by splitting the argument interval across the relevant
pieces of \(\psi\).

\subsection{The parameterized kernel}
For the middle coordinate map, let
\[
R_t=\sqrt{(t^2-1)^2+1},\qquad d_t=t^2+R_t.
\]
Then
\[
x(t)=d_t^{-1},\qquad a(x(t))=t,
\]
and
\begin{equation}\label{eq:coordinate-derivative}
x'(t)=-\frac{2t}{d_t^2}
\left(1+\frac{t^2-1}{R_t}\right),\qquad
\frac{d}{dt}a(x(t))=1.
\end{equation}
For the lower and upper maps, \(x'\) is simply their constant
slope and the derivative of \(a\) is zero.

For any pair of maps, use parameters \(\alpha,\beta\) and write
\[
x=x(\alpha),\quad a=a(x(\alpha)),\quad
y=y(\beta),\quad b=a(y(\beta)),\quad d=\min(x,y).
\]
The prime on \(a\) or \(b\) in the next formulas means differentiation
with respect to its parameter. On smooth pieces,
\begin{align*}
r_\alpha&=x'y+a'b(d-xy)+ab(d_\alpha-x'y),\\
r_\beta&=xy'+ab'(d-xy)+ab(d_\beta-xy').
\end{align*}
If \(x<y\), then \(d_\alpha=x'\) and \(d_\beta=0\).
If \(y<x\), then \(d_\alpha=0\) and \(d_\beta=y'\).
If the input intervals overlap, the code uses
\[
d_\alpha\in x'[0,1],\qquad d_\beta\in y'[0,1],
\]
which encloses both branches and any intervening one-sided choice.
For the gap
\(\Delta(\alpha,\beta)=h(r)-\varphi(x)\varphi(y)\), we obtain
\begin{align*}
\Delta_\alpha&=h'(r)r_\alpha-\varphi'(x)x'\varphi(y),\\
\Delta_\beta&=h'(r)r_\beta-\varphi(x)\varphi'(y)y'.
\end{align*}
Although natural interval evaluation may overestimate the range
of \(r\), its true range satisfies
\[
xy\le r\le\min(x,y).
\]
Indeed \(r\) is a convex combination of \(xy\) and \(\min(x,y)\)
with weight \(a(x)a(y)\in[0,1]\). On the compact kernel square
this implies
\[
\frac1{256}=L^2\le r\le U.
\]
The code intersects its interval for \(r\) with this proven range
before evaluating \(h\) and \(h'\). This narrowing preserves every
possible true value, while avoiding spurious domain violations.

\subsection{Bounds for an entire edge box}
For a mixture box \(t\in[t_-,t_+]\),
\(\xi\in[\xi_-,\xi_+]\), let
\[
v_{\max}=t_+(1-\xi_-).
\]
Since \(G\) is increasing, equation \eqref{eq:edge1} gives the
uniform lower bound
\[
(k_0-\tfrac12)G(2t_-)-\frac12G(v_{\max}).
\]
The stronger edge test verifies, with outward rounding,
\[
v_{\max}\le-\ln B,\qquad
G(v_{\max})\le[2mw_4CG(t_-)]^2.
\]
These inequalities imply \eqref{eq:edge-condition} at every
point in the box. Thus \eqref{eq:edge2} yields the uniform positive
lower bound \((k_0-\tfrac12)G(2t_-)\).
This explains the exact edge tests in the implementation.

\clearpage
\section{Complete interval verification}\label{app:code}

The following program supplies the computational part of
Theorem~\ref{thm:main}. Its four phases verify the scalar signs,
the compact kernel inequality, the bridge inequality, and the compact
mixture inequality, using the bounds in
Section~\ref{sec:certificates} and Appendix~\ref{app:derivatives}.
All four phases have completed successfully for the stated parameters.
No optimizer, data file, or external program is required.

To reproduce the computation, copy the listing into a file named
\texttt{verify.py} and run \texttt{python verify.py} with ordinary
Python~3. Do not use \texttt{-O}, and leave \texttt{PYTHONOPTIMIZE}
unset, since the assertions enforce domains and signs.
The program prints its results and also writes them as JSON files.
Those files are outputs of the computation, not inputs to the proof.
The completed computation used Python~3.12.13 with Decimal~1.70
and libmpdec~4.0.0.

The symbol \texttt{D} in the program is Python's Decimal class;
the mathematical function \(D(x)\) is evaluated inside
\texttt{uphi\_piece}. The function \texttt{uphi} implements
\(\varphi\), and \texttt{psi} implements \(\psi\).
The parameter \texttt{XCUT} is the exact breakpoint \(181/256\).

This computation proves inequalities over complete intervals and
rectangles. It is distinct from evaluating the decimal approximation
of the root in \eqref{eq:stationary}; the proof does not require a
certified numerical enclosure of that root.

\begin{lstlisting}
"""Interval verification for the union-closed frequency bound.

Run with Python 3, without -O and with PYTHONOPTIMIZE unset.
Only the Python standard library is required.
"""
from decimal import Decimal as D, Context, ROUND_FLOOR, ROUND_CEILING, getcontext
import json
import time
import hashlib

getcontext().prec = 60
PREC = 35
DOWN = Context(prec=PREC, rounding=ROUND_FLOOR)
UP = Context(prec=PREC, rounding=ROUND_CEILING)


class I:
    __slots__ = ('lo', 'hi')

    def __init__(self, lo, hi=None):
        if isinstance(lo, I):
            self.lo, self.hi = lo.lo, lo.hi
            return
        self.lo = lo if isinstance(lo, D) else D(str(lo))
        self.hi = self.lo if hi is None else (hi if isinstance(hi, D) else D(str(hi)))
        assert self.lo <= self.hi

    def __add__(self, other):
        b = I(other)
        return I(DOWN.add(self.lo,b.lo), UP.add(self.hi,b.hi))
    __radd__ = __add__

    def __neg__(self):
        return I(self.hi.copy_negate(), self.lo.copy_negate())

    def __sub__(self, other): return self + (-I(other))
    def __rsub__(self, other): return I(other) + (-self)

    def __mul__(self, other):
        b = I(other)
        pairs = [(a,c) for a in (self.lo,self.hi) for c in (b.lo,b.hi)]
        return I(min(DOWN.multiply(a,c) for a,c in pairs), max(UP.multiply(a,c) for a,c in pairs))
    __rmul__ = __mul__

    def __truediv__(self, other):
        b = I(other)
        assert b.lo > 0 or b.hi < 0, (b.lo,b.hi)
        return self * I(DOWN.divide(D(1),b.hi), UP.divide(D(1),b.lo))

    def __rtruediv__(self, other): return I(other)/self

    def exp(self):
        # Decimal exp/ln/sqrt are correctly rounded to nearest; enlarge one ulp.
        return I(DOWN.next_minus(DOWN.exp(self.lo)), UP.next_plus(UP.exp(self.hi)))

    def log(self):
        assert self.lo > 0
        return I(DOWN.next_minus(DOWN.ln(self.lo)), UP.next_plus(UP.ln(self.hi)))

    def sqrt(self):
        assert self.lo >= 0
        lo = D(0) if self.lo == 0 else DOWN.next_minus(DOWN.sqrt(self.lo))
        return I(lo, UP.next_plus(UP.sqrt(self.hi)))

    def absmax(self): return max(self.lo.copy_abs(),self.hi.copy_abs())

    def midpoint(self): return DOWN.divide(DOWN.add(self.lo,self.hi),D(2))

    def radius(self, center): return max(UP.subtract(center,self.lo),UP.subtract(self.hi,center))

    def width(self): return UP.subtract(self.hi,self.lo)

    def __repr__(self): return f'[{self.lo},{self.hi}]'


ZERO = I(0)
ONE = I(1)
HALF = I('.5')
LOG2 = I(2).log()


def hull(values): return I(min(x.lo for x in values),max(x.hi for x in values))


def hpoint(x):
    z = I(x)
    if z.lo == z.hi and z.lo in (D(0),D(1)): return ZERO
    assert z.lo > 0 and z.hi < 1, z
    return -(z*z.log()+(1-z)*(1-z).log())


def h(z):
    z=I(z)
    assert z.lo >= 0 and z.hi <= 1, z
    a,b=hpoint(z.lo),hpoint(z.hi)
    lower=min(a.lo,b.lo)
    upper=LOG2.hi if z.lo <= D('.5') <= z.hi else max(a.hi,b.hi)
    return I(lower,upper)


def hp(z): return (1-z).log()-z.log()


# Exact rational center; no algebraic-root approximation is used.
XSTAR = I('.690908')
HSTAR = h(XSTAR)
KSTAR = hp(XSTAR)/(2*HSTAR)
SIGMA = I('0.99999999')
XCUT = D(181)/D(256)
CUTH = h(I(XCUT)*I(XCUT))

def Gpoint(u):
    u=I(u)
    if u.lo == u.hi == 0: return ZERO
    x=(-u).exp()
    return h(x)/x


def G(u): return I(Gpoint(u.lo).lo,Gpoint(u.hi).hi)


def GPpoint(u):
    x=(-I(u)).exp()
    return -(1-x).log()/x


def GP(u):
    # G' decreases because G''<0, as follows from its convergent series.
    return I(GPpoint(u.hi).lo,GPpoint(u.lo).hi)



import sys

ZERO=I(0); ONE=I(1)
A=I('.716'); B=I('.721')
M=I('.61711475'); W0=I('.8102221'); W4=I('.1897779'); K0=M*W0
ROOT_HALF=1/I(2).sqrt()
LOW=D(1)/16; HIGH=D(15)/16
TMIN=D('.001'); TMAX=D(16)


def E(x):
    d=x-XSTAR
    return -KSTAR*d-I('.7')*d*d


def EP(x): return -KSTAR-I('1.4')*(x-XSTAR)


def uphi_piece(x,idx):
    hx=h(x)
    if idx==0: dx,dp=hx,hp(x)
    elif idx==1: dx,dp=LOG2,ZERO
    else: dx,dp=LOG2*h(x*x)/CUTH,LOG2*2*x*hp(x*x)/CUTH
    f=SIGMA*(dx*hx/HSTAR).sqrt()*E(x).exp()
    fp=f*(dp/(2*dx)+hp(x)/(2*hx)+EP(x))
    return f,fp


def uphi(x):
    # The input intervals used in compact certificates avoid 0 and 1.
    cuts=[D(0),D('.5'),XCUT,D(1)]
    vals=[];ders=[]
    for j in range(3):
        lo,hi=max(x.lo,cuts[j]),min(x.hi,cuts[j+1])
        if lo<=hi:
            f,fp=uphi_piece(I(lo,hi),j)
            vals.append(f);ders.append(fp)
    return hull(vals),hull(ders)


def sfun(x):
    s=h(x*x).sqrt()
    return s,x*hp(x*x)/s


def psi(x):
    vals=[];ders=[]
    for lo,hi,idx in [(max(x.lo,D(0)),min(x.hi,A.hi),0),
                       (max(x.lo,A.lo),min(x.hi,B.hi),1),
                       (max(x.lo,B.lo),min(x.hi,D(1)),2)]:
        if lo>hi: continue
        xx=I(lo,hi)
        if idx==0: f,fp=uphi(xx)
        elif idx==2: f,fp=sfun(xx)
        else:
            p,pp=uphi(xx);s,sp=sfun(xx)
            z=(xx-A)/(B-A)
            f=(1-z)*p+z*s
            fp=(1-z)*pp+z*sp+(s-p)/(B-A)
        vals.append(f);ders.append(fp)
    return hull(vals),hull(ders)


def Psi(u):
    x=(-u).exp();f,fp=psi(x)
    return f/x,f/x-fp


CMIN=SIGMA/HSTAR.sqrt()*E(I(0)).exp()
CBOTTOM=SIGMA/HSTAR.sqrt()*E(I(LOW)).exp()
LOWPHI=SIGMA*LOG2/HSTAR.sqrt()*E(I('.5')).exp()
CGLOBAL=SIGMA*(2*LOG2/(HSTAR*CUTH)).sqrt()*E(I(1)).exp()
CTOP=SIGMA/I(HIGH)*(LOG2/CUTH*h(I(HIGH)*I(HIGH))*h(I(HIGH))/HSTAR).sqrt()*E(I(1)).exp()


def scalar_checks():
    sa,_=sfun(A);pa,_=uphi(A)
    checks={
      'exponent_increasing':EP(I(1)),
      'top_extension_margin':1-CGLOBAL*CTOP,
      'bottom_low_margin':1-CBOTTOM*LOWPHI,
      'bottom_high_margin':1-2*CBOTTOM*LOWPHI,
      'phi_over_x_decreasing_margin':1-EP(I('.5'))-hp(I(XCUT)*I(XCUT))/CUTH,
      'bridge_small_y_margin':A-sa*CBOTTOM,
      'upper_tail_dominance_at_a':sa-pa,
      'upper_tail_ratio_decreasing_margin':-(hp(A)/(2*h(A))+EP(A)),
      'independent_edge_margin':K0-I('.5'),
      'small_t_margin':2*M*W4*(1-(4*I(TMIN)).log())-1,
      'small_t_uses_upper_tail':(-2*I(TMIN)).exp()-B,
      'large_t_margin':M*W4*CMIN*CMIN*I(TMAX)-I('.5')
    }
    for key,value in checks.items():
        assert value.lo>0,(key,value)
    return {k:{'lo':str(v.lo),'hi':str(v.hi)} for k,v in checks.items()}


def bridge(y,derivatives=False):
    p,pp=uphi(y);sa,_=sfun(A)
    f=h(A*y)-sa*p
    if derivatives:return f,A*hp(A*y)-sa*pp
    return f


def verify_bridge():
    stack=[I(LOW,A.hi)];n=0;minmargin=None;digest=hashlib.sha256()
    while stack:
        y=stack.pop();n+=1
        if n>100000: raise RuntimeError('Bridge resource limit')
        f,fp=bridge(y,True)
        mid=y.midpoint()
        if f.lo>0:bound=f.lo
        else:
            fc=bridge(I(mid))
            if fc.hi<0:raise ArithmeticError(('bridge negative witness',mid,fc))
            bound=DOWN.subtract(fc.lo,UP.multiply(fp.absmax(),y.radius(mid)))
        if bound>0:
            minmargin=bound if minmargin is None else min(minmargin,bound)
            digest.update(f'{y.lo},{y.hi},{bound}\n'.encode())
        else: stack.extend([I(y.lo,mid),I(mid,y.hi)])
    return {'status':'CERTIFIED','visited':n,'leaves':(n+1)//2,'smallest_lower':str(minmargin),'leaf_sha256':digest.hexdigest()}


def coord(t,branch):
    """Map [0,1] to low, middle, or high x ranges; return x,a,dx,da."""
    if branch==0:return I(LOW)+(I('.5')-I(LOW))*t,ONE,I('.5')-I(LOW),ZERO
    if branch==2:return ROOT_HALF+(I(HIGH)-ROOT_HALF)*t,ZERO,I(HIGH)-ROOT_HALF,ZERO
    z=t*t;r=((z-1)*(z-1)+1).sqrt();den=z+r
    x=1/den
    xp=-2*t*(1+(z-1)/r)/(den*den)
    return x,t,xp,ONE


def kernel_gap(t,z,bi,bj,derivatives=False):
    x,a,xp,ap=coord(t,bi);y,b,yp,bp=coord(z,bj)
    lower=I(min(x.lo,y.lo),min(x.hi,y.hi))
    if x.hi<=y.lo: lx,lz=xp,ZERO
    elif y.hi<=x.lo:lx,lz=ZERO,yp
    else: lx,lz=xp*I(0,1),yp*I(0,1)
    q=x*y+a*b*(lower-x*y)
    # For all real arguments: xy<=q<=min(x,y).
    q=I(max(q.lo,D(1)/256),min(q.hi,HIGH))
    px,pdx=uphi(x);py,pdy=uphi(y)
    val=h(q)-px*py
    if not derivatives:return val
    qt=xp*y+ap*b*(lower-x*y)+a*b*(lx-xp*y)
    qz=x*yp+a*bp*(lower-x*y)+a*b*(lz-x*yp)
    return val,hp(q)*qt-pdx*xp*py,hp(q)*qz-px*pdy*yp


def verify_kernel():
    start=time.monotonic();last=start
    stack=[(I(0,1),I(0,1),i,j,0) for i in range(3) for j in range(i,3)]
    n=leaves=0;minmargin=None;maxdepth=0;digest=hashlib.sha256()
    while stack:
        t,z,i,j,depth=stack.pop();n+=1;maxdepth=max(maxdepth,depth)
        if n>500000 or depth>65:raise RuntimeError(('kernel unresolved',n,depth,t,z,i,j))
        try:val,dt,dz=kernel_gap(t,z,i,j,True)
        except AssertionError:val=dt=dz=None
        tm,zm=t.midpoint(),z.midpoint()
        if val is not None and val.lo>0:bound=val.lo
        else:
            fc=kernel_gap(I(tm),I(zm),i,j)
            if fc.hi<0:raise ArithmeticError(('kernel negative witness',i,j,tm,zm,fc))
            et=UP.multiply(dt.absmax(),t.radius(tm)) if dt else None
            ez=UP.multiply(dz.absmax(),z.radius(zm)) if dz else None
            bound=DOWN.subtract(DOWN.subtract(fc.lo,et),ez) if dt else D('-Infinity')
        if bound>0:
            leaves+=1;minmargin=bound if minmargin is None else min(minmargin,bound)
            digest.update(f'{i},{j},{t.lo},{t.hi},{z.lo},{z.hi},{bound}\n'.encode())
        else:
            if dt and et>=ez or (not dt and t.width()>=z.width()):
                stack.extend([(I(t.lo,tm),z,i,j,depth+1),(I(tm,t.hi),z,i,j,depth+1)])
            else:stack.extend([(t,I(z.lo,zm),i,j,depth+1),(t,I(zm,z.hi),i,j,depth+1)])
        now=time.monotonic()
        if now-last>15:
            print(json.dumps({'phase':'kernel','visited':n,'leaves':leaves,'pending':len(stack),'seconds':round(now-start,1)}),flush=True);last=now
    return {'status':'CERTIFIED','visited':n,'leaves':leaves,'max_depth':maxdepth,'smallest_lower':str(minmargin),'leaf_sha256':digest.hexdigest(),'seconds':round(time.monotonic()-start,2)}


def mixture(t,z,derivatives=False):
    u,v=t*(1+z),t*(1-z)
    pu,pup=Psi(u);pv,pvp=Psi(v)
    val=M*(W0*G(2*t)+W4*pu*pv)-(G(u)+G(v))/2
    if not derivatives:return val
    gu,gv=GP(u),GP(v)
    dt=2*K0*GP(2*t)+M*W4*((1+z)*pup*pv+(1-z)*pu*pvp)-((1+z)*gu+(1-z)*gv)/2
    dz=t*(M*W4*(pup*pv-pu*pvp)-(gu-gv)/2)
    return val,dt,dz


def verify_mixture():
    start=time.monotonic();last=start
    knots=list(map(D,['.001','.01','.1','.3','.5','1','2','4','8','16']))
    stack=[(I(a,b),I(0,1),0) for a,b in zip(knots[:-1],knots[1:])]
    n=leaves=edges=0;minmargin=None;maxdepth=0;digest=hashlib.sha256()
    tail_limit=-B.log()
    while stack:
        t,z,depth=stack.pop();n+=1;maxdepth=max(maxdepth,depth)
        if n>500000 or depth>65:raise RuntimeError(('mixture unresolved',n,depth,t,z))
        vmax=UP.multiply(t.hi,UP.subtract(D(1),z.lo))
        vg=G(I(vmax))
        bound_edge=(K0-I('.5'))*G(2*I(t.lo))-vg/2
        edge2=(2*M*W4*CMIN*G(I(t.lo)))
        if bound_edge.lo>0:
            bound=bound_edge.lo;edges+=1
        elif vmax<=tail_limit.lo and vg.hi<=(edge2*edge2).lo:
            bound=((K0-I('.5'))*G(2*I(t.lo))).lo;edges+=1
        else:
            if (t*(1-z)).lo<=0:
                mid=z.midpoint();stack.extend([(t,I(z.lo,mid),depth+1),(t,I(mid,z.hi),depth+1)]);continue
            try:val,dt,dz=mixture(t,z,True)
            except AssertionError:val=dt=dz=None
            tm,zm=t.midpoint(),z.midpoint()
            if val is not None and val.lo>0:bound=val.lo
            else:
                fc=mixture(I(tm),I(zm))
                if fc.hi<0:raise ArithmeticError(('mixture negative witness',tm,zm,fc))
                et=UP.multiply(dt.absmax(),t.radius(tm)) if dt else None
                ez=UP.multiply(dz.absmax(),z.radius(zm)) if dz else None
                bound=DOWN.subtract(DOWN.subtract(fc.lo,et),ez) if dt else D('-Infinity')
                if bound<=0:
                    if dt and et>=ez or (not dt and t.width()>=z.width()):
                        stack.extend([(I(t.lo,tm),z,depth+1),(I(tm,t.hi),z,depth+1)])
                    else:stack.extend([(t,I(z.lo,zm),depth+1),(t,I(zm,z.hi),depth+1)])
                    continue
        leaves+=1;minmargin=bound if minmargin is None else min(minmargin,bound)
        digest.update(f'{t.lo},{t.hi},{z.lo},{z.hi},{bound}\n'.encode())
        now=time.monotonic()
        if now-last>15:
            print(json.dumps({'phase':'mixture','visited':n,'leaves':leaves,'pending':len(stack),'seconds':round(now-start,1)}),flush=True);last=now
    return {'status':'CERTIFIED','visited':n,'leaves':leaves,'edge_leaves':edges,'max_depth':maxdepth,'smallest_lower':str(minmargin),'leaf_sha256':digest.hexdigest(),'seconds':round(time.monotonic()-start,2)}


def main():
    selected=sys.argv[1:] or ['scalars','bridge','kernel','mixture']
    functions={'scalars':scalar_checks,'bridge':verify_bridge,'kernel':verify_kernel,'mixture':verify_mixture}
    for name in selected:
        result=functions[name]()
        with open(f'frankl_038288525_{name}.json','w') as f:json.dump(result,f,indent=2)
        print(json.dumps({'phase':name,'result':result}),flush=True)


if __name__=='__main__':main()
\end{lstlisting}

\end{document}